\newcommand{\dA}{{\mathbb A}}
\newcommand{\dB}{{\mathbb B}}
\newcommand{\dC}{{\mathbb C}}
\newcommand{\dom}{\mathrm{dom}}
\newcommand{\Wgamma}{W_K}
\newcommand{\Aut}{\mathrm{Aut}}
\newcommand{\CWP}{\mathrm{CWP}}
\newcommand{\Emc}{{\mathcal{E}}}
\newcommand{\ran}{\mathrm{ran}}
\newcommand{\Red}{\mathrm{Red}}
\newcommand{\UCWP}{\mathrm{UCWP}}
\newcommand{\RUCWP}{\mathrm{RUCWP}}
\newcommand{\RCWP}{\mathrm{RCWP}}
\newcommand{\eval}{{\mathrm{val}}}
\newcommand{\Zmc}{{\mathcal{Z}}}
\begin{document}

\title{Compressed word problems in HNN-extensions and amalgamated products}
\author{Niko Haubold \and Markus Lohrey
\institute{Institut f\"ur Informatik, Universit\"at Leipzig\\
\email{\{haubold,lohrey\}@informatik.uni-leipzig.de}}}

\maketitle

\begin{abstract}
It is shown that the compressed word problem for 
an HNN-extension $\langle H, t \mid t^{-1} a t = \varphi(a) (a \in A) \rangle$ 
with $A$ finite is polynomial time Turing-reducible to the compressed
word problem for the base group $H$. An analogous result for 
amalgamated free products is shown as well.
\end{abstract}

\section{Introduction}

Since it was introduced by Dehn in 1910 \cite{Dehn10}, the {\em word problem} for
groups has emerged to a fundamental computational problem linking group theory, topology,
mathematical logic, and computer science. The word problem for a finitely
generated group $G$ asks, whether a given word over the generators of $G$
represents the identity of $G$, see Section~\ref{S groups} for more details.
Dehn proved the decidability of the word
problem for surface groups. On the other hand, 50 years after the 
appearance of Dehn's work, Novikov \cite{Nov58} and independently 
Boone \cite{Boo59} proved the existence of a finitely presented group
with undecidable word problem. However, many natural classes of groups with 
decidable word problem are known, as for instance finitely generated 
linear groups, automatic groups and one-relator groups. 
With the rise of computational complexity theory, also
the complexity of the word problem became an active research area. This
development has gained further attention by potential applications 
of combinatorial group theory for secure cryptographic systems
\cite{MyShUs08}.

In order to prove upper bounds on the complexity of the word problem
for a certain group $G$, a ``compressed'' variant of the word problem for $G$
was introduced in \cite{Loh06siam,LoSchl07,Schl06}. 
In the {\em compressed word problem} for $G$, 
the input word over the generators is not given
explicitly but succinctly via a so called {\em straight-line program} (SLP for
short). This is a context free grammar that generates exactly one word,
see Section~\ref{S SLP}.
Since the length of this word may grow exponentially with the size (number of 
productions) of the SLP, SLPs can be seen indeed as a succinct string
representation. SLPs turned out to be a very
flexible compressed representation of strings, which are well suited for
studying algorithms for compressed data, see
e.g. \cite{BeChRa08,GaKaPlRy96,Lif07,Loh06siam,MiShTa97,Pla94,PlaRy98}.  
In  \cite{LoSchl07,Schl06} it was shown that the word problem for the automorphism 
group $\Aut(G)$ of $G$ can be reduced in polynomial time to the 
{\em compressed} word problem for $G$. In \cite{Schl06}, it was shown
that the compressed word problem for a finitely generated free group $F$ 
can be solved in polynomial time. Hence, the word problem for 
$\Aut(F)$ turned out to be solvable in polynomial time \cite{Schl06}, which solved
an open problem from \cite{KMSS03}. Generalizations of this result can 
be found in \cite{LoSchl07}.

In this paper, we prove a transfer theorem for the compressed word problem
of {\em HNN-extensions} \cite{HiNeNe49}. For a base group $H$ with two isomorphic subgroups 
$A$ and $B$ and an isomorphism $\varphi : A \to B$, the corresponding
HNN-extension is the group
\begin{equation} \label{HNN}
G=\langle H,t \mid  t^{-1} a t = \varphi(a) \, (a \in A) \rangle.
\end{equation}
Intuitively, it is obtained by adding to $H$ a new generator $t$ 
(the {\em stable letter}) 
in such a way that conjugation of $A$ by $t$ realizes 
$\varphi$. The subgroups $A$ and $B$ are also 
called the {\em associated subgroups}.
A related operation is that of the {\em amalgamated free product} of two 
groups $H_1$ and $H_2$ with isomorphic subgroups 
$A_1 \leq H_1$, $A_2 \leq H_2$ and an isomorphism $\varphi : A_1 \to A_2$.
The corresponding amalgamated free product is the group
$$
G=\langle H_1*H_2 \mid a=\varphi(a) \, (a\in A_1)\rangle .
$$
Intuitively, it results from the free product $H_1*H_2$ by
identifying every element $a\in A_1$ with $\varphi(a) \in A_2$. The subgroups
$A_1$ and $A_2$ are also called the {\em amalgamated} (or {\em identified}) 
subgroups.

HNN-extensions were introduced by Higman, Neumann, and Neumann in 1949
\cite{HiNeNe49}. They proved that $H$ embeds into the group $G$ from
(\ref{HNN}). Modern proofs of the above mentioned Novikov-Boone theorem
use HNN-extensions as the main tool for constructing finitely presented
groups with an undecidable word problem \cite{LySch77}. In particular,
arbitrary HNN-extensions do not preserve good algorithmic properties
of groups like decidability of the word problem.
In this paper, we restrict to HNN-extensions (resp.
amalgamated products) with finite
associated (resp. identified) subgroups, which is an important 
subcase. Stallings proved \cite{Stal71}
that a group has more than one end if and only if it is either an HNN-extension
with finite associated subgroups or an amalgamated free product with finite
identified subgroups. Moreover, a group is virtually-free (i.e., has a free
subgroup of finite index) if and only if it can be built up from finite groups
using amalgamated products with finite identified subgroups and HNN-extensions
with finite associated subgroups \cite{DiDu89}.

It is not hard to see that the word problem for an 
HNN-extension (\ref{HNN})
with $A$ finite can be reduced in polynomial time 
to the word problem of the base group $H$. The
main result of this paper extends this transfer theorem to 
the compressed setting: the compressed word 
problem for (\ref{HNN}) with $A$ finite can be reduced in polynomial time 
to the compressed word problem for $H$.
In fact, we prove a slightly more general result, which deals
with HNN-extensions with several stable letters $t_1, \ldots, t_n$,
where the number $n$ is part of the input.
For each stable letter $t_i$ the input contains 
a {\em partial} isomorphism $\varphi_i$ from the fixed finite subgroup $A \leq H$ to 
the fixed finite subgroup $B \leq H$ and we consider the multiple HNN-extension
$$
G = \langle H, t_1, \ldots, t_n \mid t_i^{-1} a t_i = \varphi_i(a)\ (1 \leq i \leq n, a \in \dom(\varphi_i)) \rangle.
$$
Our polynomial time reduction consists of a sequence of polynomial time
reductions. In a first step (Section~\ref{S reduced}), we reduce the
compressed word problem for $G$ to the same problem for {\em reduced sequences}.
These are strings (over the generators of $H$ and the symbols $t_1, t_1^{-1},
\ldots, t_n, t_n^{-1}$)
that do not contain a substring of the form $t_i^{-1} w t_i$ (resp. $t_i w t_i^{-1}$), 
where the string $w$ represents a group element from the domain (resp. range)
of $\varphi_i$. In a second step (Section~\ref{S constant number})
we reduce the number $n$ of stable letter to a constant $\delta$, which only 
depends on the size of the fixed subgroup $A$.
The main step of the paper reduces the compressed word problem for 
reduced sequences over an HNN-extension with $\delta$ many stable 
letters (and associated partial isomorphisms from $A$ to $B$)
into two simpler problems: (i) the same problem but with only $\delta-1$
many stable letters and (ii) the same problem (with at most $\delta$ many
stable letters) but with associated subgroups that are strictly smaller
than $A$. 
By iterating this procedure, we arrive after a constant number
of iterations (where each iteration is a polynomial time reduction)
at a compressed word problem for which we directly know the existence
of a polynomial time reduction to the compressed word problem
for the base group $H$. Since the composition of a constant
number of polynomial time reductions is again a polynomial time
reduction, our main result follows.

The main reduction step in our algorithm uses techniques similar to those
from \cite{LohSen06icalp}, where a transfer theorem for solving equations
over HNN-extensions with finite associated subgroups was shown.

{}From the close relationship of HNN-extensions with amalgamated free products, a
polynomial time reduction from the compressed problem for an amalgamated free
product $\langle H_1*H_2 \mid a=\varphi(a) \, (a\in A_1)\rangle$ (with $A_1$ finite)
to the compressed word problems of $H_1$ and $H_2$ 
is deduced in the final Section~\ref{amalprod}.

\section{Preliminaries}\label{S prel}

Let $\Sigma$ be a finite alphabet. The empty word is denoted by $\varepsilon$.
With $\Sigma^+ = \Sigma^*\setminus\{\varepsilon\}$ 
we denote the set of non-empty words over $\Sigma$.  
For a word $w=a_1\cdots a_n$ let $|w|=n$,
$\text{alph}(w)=\{a_1,\dots,a_n\}$, 
and $w[i:j]=a_i\cdots a_j$ for $1\leq i \leq j \leq n$. 
Moreover, let $w[i:] = w[i:n]$ and $w[:i] = w[1:i]$.

\subsection{Groups and the word problem} \label{S groups}

For background in combinatorial group theory see \cite{LySch77}.
For a group $G$ and two elements $x,y \in G$ we denote with
$x^y = y^{-1} x y$ the conjugation of $x$ by $y$.
Let $G$ be a \textit{finitely generated group} and let $\Sigma$ be a finite \textit{group
generating set} for $G$. Hence, $\Sigma^{\pm 1} = \Sigma\cup \Sigma^{-1}$ 
is a finite \textit{monoid generating set} for $G$ and there exists a canonical monoid homomorphism
$h:(\Sigma^{\pm 1})^*\rightarrow G$, which maps a word $w \in (\Sigma^{\pm
1})^*$ to the group element represented by $w$. For $u,v\in (\Sigma^{\pm
1})^*$ we will also say that $u=v$  in $G$ in case $h(u)=h(v)$.
The \textit{word problem for $G$ with respect to $\Sigma$} is the following
decision problem:

\medskip

\noindent
INPUT: A word $w\in(\Sigma^{\pm 1})^*$.

\smallskip

\noindent
QUESTION: $w=1$ in $G$?

\medskip

\noindent
It is well known that if $\Gamma$ is another finite
generating set for $G$, then the word problem for $G$ with respect to $\Sigma$
is logspace many-one reducible to the word problem for $G$ with respect to $\Gamma$.
This justifies one to speak just of the word problem for the group $G$.

The \textit{free group $F(\Sigma)$} generated by $\Sigma$ can be defined as the
quotient monoid
$$
F(\Sigma)= (\Sigma^{\pm 1})^*/\{aa^{-1}
=\varepsilon \mid a\in \Sigma^{\pm1}\}.
$$
A {\em group presentation} is a pair $(\Sigma, R)$, where $\Sigma$ is an
alphabet of symbols and $R$ is a set of {\em relations} of the form $u=v$, where
$u,v \in (\Sigma^{\pm 1})^*$. The group defined by this presentation is
denoted by $\langle \Sigma \mid R \rangle$. It 
is defined as the quotient $F(\Sigma)/N(R)$, where $N(R)$ is the smallest
normal subgroup of the free group $F(\Sigma)$ that contains all elements $uv^{-1}$ with
$(u=v) \in R$. In particular
$F(\Sigma) = \langle \Sigma \mid \emptyset \rangle$.
Of course, one can assume that all relations are of the form
$r=1$. In fact, usually the set of relations is given by a set of {\em relators}
$R \subseteq (\Sigma^{\pm 1})^+$, which corresponds to the set $\{ r = 1 \mid r
\in R\}$ of relations.

The \textit{free product} of two groups $G_1$ and $G_2$ is denoted by
$G_1*G_2$. If $G_i \simeq \langle \Sigma_i \mid R_i \rangle$ for $i \in \{1,2\}$
with $\Sigma_1 \cap \Sigma_2 = \emptyset$, then
$G_1 * G_2 \simeq \langle \Sigma_1 \cup \Sigma_2 \mid R_1 \cup R_2 \rangle$. 

The following transformations on group presentations (in either direction)
are known as \textit{Tietze transformations}: 
\begin{alignat*}{2}
(\Sigma, R) & \leftrightarrow  (\Sigma, R \cup \{u=v\}) \quad & & \text{if } uv^{-1} \in
N(R) \\
(\Sigma, R) & \leftrightarrow  (\Sigma \cup \{a\}, R \cup \{a=w\}) 
\quad & & \text{if } a \not\in \Sigma^{\pm 1}, w \in (\Sigma^{\pm 1})^* 
\end{alignat*}
If $(\Sigma', R')$ can be obtained by a sequence of Tietze transformations
from $(\Sigma, R)$, then $\langle \Sigma \mid R \rangle \simeq 
\langle \Sigma' \mid R' \rangle$ \cite{LySch77}.

\subsection{Straight-line programs} \label{S SLP}
 
We are using straight-line programs as a compressed representation of strings
with reoccuring subpatterns \cite{PlRy99}. A \textit{straight-line program (SLP) over the alphabet
$\Gamma$} is a context free grammar $\dA=(V,\Gamma,S,P)$, where $V$ is the set
of \textit{nonterminals}, $\Gamma$ is the set of \textit{terminals}, $S\in V$ is
the \textit{initial nonterminal}, and $P\subseteq V \times (V\cup \Gamma)^*$ is
the set of \textit{productions} such that (i) for every $X\in V$ there is
exactly one $\alpha \in (V \cup \Gamma)^*$ with $(X,\alpha)\in P$ and (ii) there
is no cycle in the relation $\{(X,Y)\in V\times V \mid \exists \alpha : (X,\alpha)
\in P, Y\in \text{alph}(\alpha)\}$. 
A production $(X,\alpha)$ is also written as
$X\rightarrow \alpha$. The language generated by the SLP $\dA$ contains exactly
one word $\eval(\dA)$. Moreover, every nonterminal $X\in V$ generates exactly
one word that is denoted by $\eval(\dA,X)$, or briefly $\eval(X)$, if $\dA$
is clear from the context. The size of $\dA$ is $|\dA|=
\sum_{(X,\alpha)\in P}|\alpha|$. It can be seen easily that an SLP can be
transformed in polynomial time
into an SLP in \textit{Chomsky normal form}, which means that
all productions
have the form $A\rightarrow BC$ or $A\rightarrow a$ for $A,B,C\in V$ and $a \in
\Gamma$. The following tasks can be solved in polynomial time. Except 
for the last one, proofs are straightforward.
\begin{itemize}
\item Given an SLP $\dA$, calculate $|\eval(\dA)|$.
\item Given an SLP $\dA$ and a natural number $i\leq |\eval(\dA)|$, calculate
$\eval(\dA)[i]$.
\item Given SLPs $\dA$ and $\dB$ decide whether $\eval(\dA)=\eval(\dB)$ \cite{Pla94}.
\end{itemize}
A \textit{deterministic rational transducer} is a 5-tuple
$T=(\Sigma,\Gamma,Q,\delta,q_0,F)$, where $\Sigma$ is the input alphabet, $\Gamma$
is the output alphabet, $Q$ is the set of states, 
$\delta:Q\times \Sigma \rightarrow Q\times \Gamma^*$ is the partial 
transition function, $q_0\in Q$ is the initial
state, and  $F \subseteq Q$ is the set of final states.
Let $\widehat{\delta} : Q \times \Sigma^* \to Q \times \Gamma^*$ be the
canonical extension of $\delta$. The partial mapping defined by
$T$ is $[\![ T ]\!] = \{ (u,v) \in \Sigma^* \times \Gamma^* \mid 
\widehat{\delta}(q_0,u) \in F \times\{v\} \}$. 
A proof of the following lemma can be found in \cite{BeChRa08}.

\begin{lemma} \label{L transducer}
{}From a given SLP $\dA$ and a given deterministic rational transducer $T$ we can
compute in polynomial time an SLP for the string $[\![ T ]\!](\eval(\dA))$ (if it is defined).
\end{lemma}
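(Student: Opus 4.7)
The plan is to transform $\dA$ by a ``product construction'' with $T$, creating one new nonterminal per (nonterminal, state) pair. Without loss of generality assume $\dA=(V,\Sigma,S,P)$ is in Chomsky normal form. For each nonterminal $X\in V$ and each state $q\in Q$, I want to decide whether $\widehat{\delta}(q,\eval(X))$ is defined, and if so record the unique target state $\tau(X,q)\in Q$ together with a nonterminal $X_q$ such that $\eval(X_q)$ is the output produced by $T$ when reading $\eval(X)$ from $q$.

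I would proceed by a bottom-up pass through the DAG of nonterminals. For a terminal production $X\to a$, if $\delta(q,a)=(q',v)$ is defined, set $\tau(X,q)=q'$ and introduce the production $X_q\to v$ (the word $v$ has length bounded by a constant of $T$). For a binary production $X\to YZ$, consult the already-computed information: if $\tau(Y,q)=q'$ and $\tau(Z,q')=q''$ are both defined, set $\tau(X,q)=q''$ and introduce the production $X_q\to Y_qZ_{q'}$; otherwise $X_q$ is undefined. Finally, $[\![T]\!](\eval(\dA))$ is defined iff $\tau(S,q_0)$ is defined and lies in $F$, in which case the output SLP has initial nonterminal $S_{q_0}$.

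Correctness is a straightforward induction on the structure of $\dA$ using $\widehat{\delta}(q,uv)=\widehat{\delta}(\widehat{\delta}(q,u)_1,v)$ (suitably interpreting partiality and concatenating outputs). For the complexity bound, there are at most $|V|\cdot|Q|$ new nonterminals and each production has right-hand side of length at most $\max(2,c)$, where $c$ bounds the output length of a single transition of $T$; hence the resulting SLP has size polynomial in $|\dA|+|T|$, and the whole construction clearly runs in polynomial time.

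I do not expect a serious obstacle: the main subtlety is only the careful bookkeeping of partiality of $\delta$ (so that $X_q$ is introduced exactly when $\widehat{\delta}(q,\eval(X))$ is defined) and the observation that the constant output length per transition of $T$ keeps the right-hand sides of terminal productions short, so that iterating this over nested nonterminals does not blow up the SLP size.
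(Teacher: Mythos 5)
Your construction is correct: the product of the SLP with the transducer's state set, computing for each nonterminal $X$ and state $q$ the target state $\tau(X,q)$ and a fresh nonterminal $X_q$ for the output, is exactly the standard argument, and your handling of partiality and of the size bound is sound. The paper itself gives no proof but defers to the cited reference \cite{BeChRa08}, whose argument is essentially this same product construction, so there is nothing to add.
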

Let $G$ be a finitely generated group and $\Sigma$ a finite generating set
for $G$. The \textit{compressed word problem} for $G$ with respect to $\Sigma$ is
the following decision problem:

\medskip

\noindent
INPUT: An SLP $\dA$ over the terminal alphabet $\Sigma^{\pm1}$.

\smallskip

\noindent
OUTPUT: Does $\eval(\dA) =1$ hold in $G$?

\medskip

\noindent
In this problem, the input size is $|\dA|$.
As for the ordinary word problem, the complexity of the compressed word
problem does not depend on the chosen generating set.  This
allows one to speak of the compressed word problem for
the group $G$. The compressed word problem for $G$ is 
also denoted by $\CWP(G)$.

A \textit{composition system} $\dA=(V,\Gamma,S,P)$ is an SLP, which
additionally
allows productions of the form $A\rightarrow B[i:j]$ where $1\leq i\leq j\leq
|\eval(B)|$ \cite{GaKaPlRy96}. For such
a production we define $\eval(A)=\eval(B)[i:j]$. In \cite{Hag00}, Hagenah
presented a polynomial time algorithm that transforms a given composition system
into an SLP that generates the same word.

\subsection{Polynomial time Turing-reductions} \label{S reduction}

For two computational problems $A$ and $B$, we write
$A \leq_T^P B$ if $A$ is polynomial time Turing-reducible 
to $B$. This means that $A$ can be decided by a deterministic
polynomial time Turing-machine that uses $B$ as an oracle.
Clearly, $\leq_T^P$ is transitive, and
$A \leq_T^P B \in \mathsf{P}$ implies $A \in \mathsf{P}$.
More generally, if $A, B_1,\ldots,B_n$ are computational
problems, then we write $A \leq_T^P \{ B_1,\ldots, B_n\}$
if $A \leq_T^P \bigcup_{i=1}^n (\{i\} \times B_i)$.

\subsection{HNN-extensions}
\label{hnn_extensions}

Let us fix throughout this section a {\em base group} $H = \langle \Sigma \mid R \rangle$.
Let us also fix isomorphic subgroups $A_i, B_i \leq H$
($1 \leq i \leq n$) and isomorphisms $\varphi_i:
A_i \rightarrow B_i$. 
Let $h : (\Sigma^{\pm 1})^* \to H$ be the canonical
morphism, which maps a word $w \in (\Sigma^{\pm 1})^*$
to the element of $H$ it represents.
We consider the {\em HNN-extension}
\begin{equation}
G=\langle H,t_1, \ldots, t_n \mid  a^{t_i} = \varphi_i(a) \ (1 \leq i \leq
n, a \in A_i) \rangle.
\label{THE_hnn_extension}
\end{equation}
This means that  
$G=\langle \Sigma \cup \{t_1, \ldots, t_n\} \mid  R \cup
\{a^{t_i} = \varphi_i(a) \mid 1 \leq i \leq n, a \in A_i\} \rangle$.
It is known that the base group $H$ naturally embeds into
$G$ \cite{HiNeNe49}.
In this paper, we will be only concerned with 
the case that all groups $A_1,\ldots, A_n$ are finite and that
$\Sigma$ is finite. In this situation, we may assume that 
$\bigcup_{i=1}^n (A_i \cup B_i)\subseteq \Sigma$.
We say that $A_i$ and $B_i$ are {\em associated subgroups} in 
the HNN-extension $G$.
For the following, the notations $A_i(+1) = A_i$ and $A_i(-1) = B_i$ are useful.
Note that $\varphi_i^\alpha : A_i(\alpha) \to A_i(-\alpha)$ for $\alpha \in
\{+1,-1\}$.

We say that a word $u \in ( \Sigma^{\pm 1} \cup \{t_1,t_1^{-1}, \ldots, t_n, t_n^{-1}\})^*$ is {\em reduced}
if $u$ does not contain a factor of the form
$t_i^{-\alpha} w t_i^\alpha$ for $\alpha\in\{1,-1\}$,
$w \in (\Sigma^{\pm 1})^*$ and $h(w) \in A_i(\alpha)$.
With $\Red(H, \varphi_1, \ldots, \varphi_n)$ we denote
the set of all reduced words. 
For a word $u \in ( \Sigma^{\pm 1} \cup \{t_1,t_1^{-1}, \ldots, t_n, t_n^{-1}\})^*$
let us denote with $\pi_t(u)$ the projection of $u$ to the alphabet
$\{t_1,t_1^{-1}, \ldots, t_n, t_n^{-1}\}$.
The following Lemma provides a necessary and sufficient 
condition for equality of reduced strings in an HNN-extension \cite{LohSen08}:

\begin{lemma}  \label{equality reduced}
Let $u = u_0 t_{i_1}^{\alpha_1} u_1  \cdots  t_{i_\ell}^{\alpha_\ell} u_\ell$ and
$v = v_0 t_{j_1}^{\beta_1} v_1  \cdots t_{j_m}^{\beta_m} v_m$
be reduced words with 
$u_0, \ldots, u_\ell, v_0,\ldots, v_m \in  ( \Sigma^{\pm 1})^*$,
$\alpha_1, \ldots, \alpha_\ell, \beta_1,\ldots,\beta_m \in \{1,-1\}$, and
$i_1, \ldots, i_\ell, j_1,\ldots,j_m \in \{1,\ldots,n\}$.
Then $u=v$ in the HNN-extension $G$
from (\ref{THE_hnn_extension}) if and only if the following hold:
\begin{itemize}
\item $\pi_t(u) = \pi_t(v)$ (i.e., $\ell= m$, $i_k=j_k$, and
$\alpha_k=\beta_k$ for $1 \leq k \leq \ell$)
\item  there exist $c_1, \ldots, c_{2m} \in \bigcup_{k=1}^n (A_k \cup B_k)$ such that:
\begin{itemize}
\item $u_k c_{2k+1} = c_{2k} v_k$ in $H$ for $0 \leq k \leq \ell$ (here we set $c_0 = c_{2\ell+1}=1$)
\item $c_{2k-1} \in A_{i_k}(\alpha_k)$ and $c_{2k} =
\varphi^{\alpha_k}_{i_k}(c_{2k-1}) \in A_{i_k}(-\alpha_k)$ for $1 \leq k \leq
\ell$.
\end{itemize}
\end{itemize}
\end{lemma}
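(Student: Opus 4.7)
The plan is to base both directions on Britton's Lemma for HNN-extensions — any reduced word representing $1$ in $G$ contains no stable letter — together with the embedding $H \hookrightarrow G$ established by Higman--Neumann--Neumann.

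For the sufficiency direction, the hypotheses $c_{2k-1} \in A_{i_k}(\alpha_k)$ and $c_{2k} = \varphi_{i_k}^{\alpha_k}(c_{2k-1})$ translate, via the HNN-relation $a^{t_i} = \varphi_i(a)$ in both orientations, into the commutation $t_{i_k}^{\alpha_k} c_{2k} = c_{2k-1}\, t_{i_k}^{\alpha_k}$ in $G$. Rewriting $u_k c_{2k+1} = c_{2k} v_k$ as $u_k = c_{2k} v_k c_{2k+1}^{-1}$ and substituting block by block (with $c_0 = c_{2\ell+1} = 1$), the commutations collapse $u$ to $v_0 t_{i_1}^{\alpha_1} v_1 \cdots t_{i_\ell}^{\alpha_\ell} v_\ell = v$ in $G$.

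For the necessity direction I would induct on $\ell + m$. The base case $\ell = m = 0$ reduces to $u_0 = v_0$ in $H$, which is immediate from $H \hookrightarrow G$. For the inductive step, $u = v$ in $G$ gives $uv^{-1} = 1$. The word $v^{-1}$ is reduced whenever $v$ is (pinches in one correspond to pinches in the other), so every pinch in $uv^{-1}$ must straddle the join between $u$ and $v^{-1}$. Britton's Lemma therefore forces $\ell, m \geq 1$ as soon as $\ell + m > 0$, and it forces the factor $t_{i_\ell}^{\alpha_\ell}\, u_\ell v_m^{-1}\, t_{j_m}^{-\beta_m}$ to be a pinch. Matching this against the pattern $t_i^{-\alpha} w t_i^{\alpha}$ with $h(w) \in A_i(\alpha)$ yields $i_\ell = j_m$, $\alpha_\ell = \beta_m$, and $c_{2\ell} := h(u_\ell v_m^{-1}) \in A_{i_\ell}(-\alpha_\ell)$; setting $c_{2\ell-1} := \varphi_{i_\ell}^{-\alpha_\ell}(c_{2\ell}) \in A_{i_\ell}(\alpha_\ell)$ supplies the required preimage and the equation $u_\ell = c_{2\ell} v_m$ (with $c_{2\ell+1} = 1$).

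To close the induction I would use the commutation above to rewrite $u$ in $G$ as $\tilde u \cdot t_{i_\ell}^{\alpha_\ell} v_m$, where $\tilde u := u_0 t_{i_1}^{\alpha_1} u_1 \cdots t_{i_{\ell-1}}^{\alpha_{\ell-1}} (u_{\ell-1} c_{2\ell-1})$, and $v$ as $v' \cdot t_{i_\ell}^{\alpha_\ell} v_m$ with $v' := v_0 t_{j_1}^{\beta_1} v_1 \cdots t_{j_{m-1}}^{\beta_{m-1}} v_{m-1}$. Cancelling the common suffix in $G$ yields $\tilde u = v'$; both words are reduced because their stable-letter sequences are proper prefixes of those of $u, v$, and absorbing $c_{2\ell-1}$ into the trailing $H$-block of $\tilde u$ cannot create a new pinch (there is no stable letter to its right). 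The inductive hypothesis applied to $\tilde u = v'$ then produces $c_1, \ldots, c_{2\ell-2}$ completing the list. I expect the main bookkeeping obstacle to be keeping orientations straight — consistently tracking $\alpha \mapsto -\alpha$ under the convention $\varphi_i^{\alpha}: A_i(\alpha) \to A_i(-\alpha)$ — while invoking Britton's Lemma only through the pinch conclusion.
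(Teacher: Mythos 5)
Your proof is correct. Note that the paper itself does not prove this lemma at all---it is imported from \cite{LohSen08}---so there is no in-text argument to compare against; what you give is the standard self-contained derivation from Britton's Lemma. Both directions check out: sufficiency is the telescoping substitution using the commutations $t_{i_k}^{\alpha_k} c_{2k} = c_{2k-1} t_{i_k}^{\alpha_k}$ (valid in both orientations $\alpha_k = \pm 1$ under the convention $\varphi_{i_k}^{\alpha_k} : A_{i_k}(\alpha_k) \to A_{i_k}(-\alpha_k)$), and the necessity direction hinges on three small facts you correctly supply: $v^{-1}$ is reduced whenever $v$ is (because each $A_{j}(\gamma)$ is a subgroup, hence closed under inversion, so pinches of $v^{-1}$ correspond to pinches of $v$); the only candidate pinch of $uv^{-1}$ straddles the join, which via Britton's Lemma forces $\ell,m \geq 1$ when $\ell+m>0$ and yields $i_\ell = j_m$, $\alpha_\ell = \beta_m$, $c_{2\ell} = h(u_\ell v_m^{-1}) \in A_{i_\ell}(-\alpha_\ell)$; and the truncated words $\tilde u$ and $v'$ are again reduced because the absorbed element $c_{2\ell-1}$ sits in a final $H$-block with no stable letter to its right. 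The induction then assembles $c_1,\ldots,c_{2\ell-2}$ and, together with $\ell-1 = m-1$ from the inductive hypothesis, gives $\pi_t(u)=\pi_t(v)$. The one external input is Britton's Lemma itself, which is a standard consequence of the HNN normal form theorem and is no stronger than what the cited reference relies on, so there is no circularity.
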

The second condition of the lemma can be visualized by  a diagram of the following
form (also called a Van Kampen diagram, see \cite{LySch77} for more details), where $\ell=m=4$.
Light-shaded (resp. dark-shaded) faces represent relations in
$H$ (resp. relations of the form $c t_i^\alpha = t_i^\alpha
\varphi_i^\alpha(c)$ with $c \in A_i(\alpha)$).
\begin{center}
\setlength{\unitlength}{.9mm}
\begin{picture}(115,26)(0,-3)
\gasset{Nframe=n,AHnb=1,ELdist=0.5}
\gasset{Nw=.9,Nh=.9,Nfill=y}
\put(120,9){$(\dagger)$}
\drawpolygon[fillgray=0.92](0,10)(15,16)(15,4)
\drawpolygon[fillgray=0.75](15,4)(15,16)(25,18)(25,2)
\drawpolygon[fillgray=0.92](25,2)(25,18)(40,20)(40,0)
\drawpolygon[fillgray=0.75](40,0)(40,20)(50,20)(50,0)
\drawpolygon[fillgray=0.92](50,0)(50,20)(65,20)(65,0)
\drawpolygon[fillgray=0.75](65,0)(65,20)(75,20)(75,0)
\drawpolygon[fillgray=0.92](90,2)(90,18)(75,20)(75,0)
\drawpolygon[fillgray=0.75](100,4)(100,16)(90,18)(90,2)
\drawpolygon[fillgray=0.92](115,10)(100,16)(100,4)
\node(0)(0,10){}
\node(a)(15,16){}
\node(b)(25,18){}
\node(c)(40,20){}
\node(d)(50,20){}
\node(e)(65,20){}
\node(f)(75,20){}
\node(g)(90,18){}
\node(h)(100,16){}
\node(i)(115,10){}
\node(a')(15,4){}
\node(b')(25,2){}
\node(c')(40,0){}
\node(d')(50,0){}
\node(e')(65,0){}
\node(f')(75,0){}
\node(g')(90,2){}
\node(h')(100,4){}
\drawedge(0,a){$u_0$}
\drawedge(a,b){$t_{i_1}^{\alpha_1}$}
\drawedge(b,c){$u_1$}
\drawedge(c,d){$t_{i_2}^{\alpha_2}$}
\drawedge(d,e){$u_2$}
\drawedge(e,f){$t_{i_3}^{\alpha_3}$}
\drawedge(f,g){$u_3$}
\drawedge(g,h){$t_{i_4}^{\alpha_4}$}
\drawedge(h,i){$u_4$}
\drawedge[ELside=r](0,a'){$v_0$}
\drawedge[ELside=r](a',b'){$t_{i_1}^{\alpha_1}$}
\drawedge[ELside=r](b',c'){$v_1$}
\drawedge[ELside=r](c',d'){$t_{i_2}^{\alpha_2}$}
\drawedge[ELside=r](d',e'){$v_2$}
\drawedge[ELside=r](e',f'){$t_{i_3}^{\alpha_3}$}
\drawedge[ELside=r](f',g'){$v_3$}
\drawedge[ELside=r](g',h'){$t_{i_4}^{\alpha_4}$}
\drawedge[ELside=r](h',i){$v_4$}
\drawedge(a,a'){$c_1$}
\drawedge[ELpos=50](b,b'){$c_2$}
\drawedge[ELpos=50](c,c'){$c_3$}
\drawedge[ELpos=50](d,d'){$c_4$}
\drawedge[ELpos=50](e,e'){$c_5$}
\drawedge[ELpos=50](f,f'){$c_6$}
\drawedge[ELpos=50](g,g'){$c_7$}
\drawedge[ELpos=50](h,h'){$c_8$}
\end{picture}
\end{center}
The elements $c_1, \ldots, c_{2\ell}$ in such a diagram 
are also called \emph{connecting elements}.

When solving the compressed word problem for HNN-extensions we will make 
use of the following simple lemma, which allows us to transform an arbitrary
string over the generating set of an HNN-extension into a reduced one.

\begin{lemma} \label{lemma reducing}
Assume that $u = u_0 t_{i_1}^{\alpha_1} u_1  \cdots  t_{i_n}^{\alpha_n} u_n$
and $v = v_0 t_{j_1}^{\beta_1} v_1  \cdots t_{j_m}^{\beta_m} v_m$ are reduced
strings. 
Let $d(u,v)$ be the largest number
$d \geq 0$ such that
\begin{enumerate}[(a)]
\item $A_{i_{n-d+1}}(\alpha_{n-d+1}) = A_{j_d}(-\beta_d)$ (we set
$A_{i_{n+1}}(\alpha_{n+1}) = A_{j_0}(-\beta_0) = 1$) and
\item $\exists c \in A_{j_d}(-\beta_d) : t_{i_{n-d+1}}^{\alpha_{n-d+1}} u_{n-d+1}  \cdots  
t_{i_n}^{\alpha_n} \, u_n\, v_0\, t_{j_1}^{\beta_1} \cdots
v_{d-1}\,t_{j_d}^{\beta_d} = c$ in the group $G$ from (\ref{THE_hnn_extension})
(note that this  condition is satisfied for $d=0$).
\end{enumerate}
Moreover, let $c(u,v) \in A_{j_d}(-\beta_d)$ be the element 
$c$ in (b) (for $d=d(u,v)$). Then 
$$
u_0 t_{i_1}^{\alpha_1} u_1  \cdots  t_{i_{n-d(u,v)}}^{\alpha_{n-d(u,v)}} (u_{n-d(u,v)}\,
c(u,v)\,  v_{d(u,v)}) t_{j_{d(u,v)+1}}^{\beta_{d(u,v)+1}} v_{d(u,v)+1}  \cdots t_{j_m}^{\beta_m} v_m
$$
is a reduced string equal to $uv$ in $G$.
\end{lemma}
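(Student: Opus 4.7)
The plan is to verify two things about the string $w$ displayed in the conclusion: that $w = uv$ in $G$, and that $w$ is reduced. Equality is a direct substitution, while reducedness is the real content and is forced by the maximality of $d := d(u,v)$.

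For the equality, condition (b) at $d$ asserts that the ``middle'' block $t_{i_{n-d+1}}^{\alpha_{n-d+1}} u_{n-d+1} \cdots t_{j_d}^{\beta_d}$ evaluates in $G$ to the single element $c := c(u,v)$. Splitting $uv$ at the boundaries of this block and substituting gives $uv = u_0 t_{i_1}^{\alpha_1} \cdots t_{i_{n-d}}^{\alpha_{n-d}}\, u_{n-d}\, c\, v_d\, t_{j_{d+1}}^{\beta_{d+1}} \cdots t_{j_m}^{\beta_m} v_m$ in $G$, which is exactly the displayed $w$.

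For reducedness, I would enumerate the consecutive pairs of stable letters in $w$. All pairs with both letters originating from $u$ (together with their unchanged middle word $u_k$) are inherited from reduced $u$, and symmetrically for $v$; none of these can form a forbidden factor $t_i^{-\alpha} x t_i^\alpha$ with $h(x) \in A_i(\alpha)$. Assume first that $d < \min(n,m)$, so that the only new adjacency is $(t_{i_{n-d}}^{\alpha_{n-d}}, t_{j_{d+1}}^{\beta_{d+1}})$ with middle word $u_{n-d}\,c\,v_d$. If this were a forbidden factor, then $i_{n-d} = j_{d+1}$, $\alpha_{n-d} = -\beta_{d+1}$, and $h(u_{n-d} c v_d) \in A_{i_{n-d}}(-\alpha_{n-d})$.

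I would then derive a contradiction by showing that $d+1$ also satisfies (a) and (b). Condition (a) at $d+1$, namely $A_{i_{n-d}}(\alpha_{n-d}) = A_{j_{d+1}}(-\beta_{d+1})$, is immediate from the equalities above. For (b) at $d+1$, I would use (b) at $d$ to rewrite the length-$(2d+2)$ stable-letter block as $t_{i_{n-d}}^{\alpha_{n-d}} (u_{n-d} c v_d) t_{j_{d+1}}^{\beta_{d+1}}$ in $G$, and then apply the HNN-relation in the form $t_i^\alpha y t_i^{-\alpha} = \varphi_i^{-\alpha}(h(y))$, valid whenever $h(y) \in A_i(-\alpha)$, to collapse this block to $\varphi_{i_{n-d}}^{-\alpha_{n-d}}(h(u_{n-d} c v_d)) \in A_{i_{n-d}}(\alpha_{n-d}) = A_{j_{d+1}}(-\beta_{d+1})$, which supplies the required connecting element. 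The corner cases $d = n$ or $d = m$ are easier: the result then has no new adjacency of stable letters, so reducedness is inherited directly from $v$ (resp.\ $u$), and the conventions $A_{i_{n+1}}(\alpha_{n+1}) = A_{j_0}(-\beta_0) = 1$ keep the bookkeeping uniform. The main obstacle I expect is not conceptual but notational: keeping straight which of $\alpha_{n-d}$, $\beta_{d+1}$ plays the role of ``$\alpha$'' in the forbidden-factor pattern, and correspondingly which direction of $\varphi_{i_{n-d}}^{\pm\alpha_{n-d}}$ to invoke at each step.
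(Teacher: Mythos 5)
Your proposal is correct. The paper states this lemma without proof (it only offers the diagram as a visualization), and your argument is exactly the intended one: the identity $w=uv$ in $G$ follows by substituting condition (b) for the collapsed middle block, and reducedness follows because the unique new $t$-adjacency $t_{i_{n-d}}^{\alpha_{n-d}}\,(u_{n-d}\,c\,v_d)\,t_{j_{d+1}}^{\beta_{d+1}}$, were it a pinch, would produce a witness for conditions (a) and (b) at $d+1$ via $t_i^{\alpha}yt_i^{-\alpha}=\varphi_i^{-\alpha}(h(y))$, contradicting maximality of $d$. Your handling of the boundary cases $d=n$ or $d=m$ and of the sign conventions is also right.
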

\noindent
The above lemma can be visualized by the following diagram.

\begin{center}
\setlength{\unitlength}{.9mm}
\begin{picture}(115,100)(0,-3)
\gasset{Nframe=n,AHnb=1,ELdist=0.5}
\gasset{Nw=.9,Nh=.9,Nfill=y}
\drawpolygon[fillgray=0.75](51,34)(55,45)(67,45)(72,34) 
\drawpolygon[fillgray=0.75](58,57)(60,73)(62,73)(64,57)
\node(a)(0,10){}
\node(b)(12,12){}
\node(c)(23,15){}
\node(d)(33,19){}
\node(e)(43,24){}
\node(f)(51,34){}
\node(g)(55,45){}
\node(h)(58,57){}
\node(i)(60,73){}
\node(j)(61,90){}
\node(k)(62,73){}
\node(l)(64,57){}
\node(m)(67,45){}
\node(n)(72,34){}
\node(o)(81,24){}
\node(p)(91,19){}
\node(q)(101,15){}
\node(r)(112,12){}
\node(s)(124,10){}
\drawedge(a,b){$u_0$}
\drawedge(b,c){$t^{\alpha_1}_{i_1}$}
\drawedge(c,d){$u_1$}
\drawedge(d,e){\begin{turn}{27}$\cdots$\end{turn}}
\drawedge(e,f){$u_{n-d}$}
\drawedge(f,g){$t^{\alpha_{n-d+1}}_{i_{n-d+1}}$}
\drawedge(g,h){\begin{turn}{75}$\cdots$\end{turn}}
\drawedge(h,i){$t^{\alpha_{n}}_{i_n}$}
\drawedge(i,j){$u_n$}
\drawedge(j,k){$v_0$}
\drawedge(k,l){$t_{j_1}^{\beta_1}$}
\drawedge(l,m){\begin{turn}{-75}$\cdots$\end{turn}}
\drawedge(m,n){$t_{j_d}^{\beta_d}$}
\drawedge(n,o){$v_d$}
\drawedge(o,p){\begin{turn}{-27}$\cdots$\end{turn}}
\drawedge(p,q){$v_{m-1}$}
\drawedge(q,r){$t^{\beta_m}_{j_m}$}
\drawedge(r,s){$v_m$}
\drawedge[ELside=r](f,n){$c(u,v)$}
\end{picture}
\end{center}

\subsection{Some simple compressed word problems}

We will use the following theorem on free products $G_1 * G_2$
that was shown in \cite{LoSchl07}.

\begin{theorem} \label{T free}
$\CWP(G_1 * G_2) \leq_T^P \{\CWP(G_1), \CWP(G_2)\}$.
\end{theorem}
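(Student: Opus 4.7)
The plan is to rely on the classical normal form theorem for free products (see \cite{LySch77}): a word $w \in (\Sigma_1 \cup \Sigma_2)^{\pm *}$ represents $1$ in $G_1 * G_2$ if and only if iteratively removing from $w$ a maximal monochromatic factor (over a single $\Sigma_i^{\pm 1}$) that is trivial in the corresponding factor $G_i$, and concatenating the neighbours, eventually yields the empty word. Given an SLP $\dA$ with $\eval(\dA) = w$, it thus suffices to decide whether the fully reduced normal form of $w$ is empty.

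I would process the nonterminals of $\dA$ in bottom-up (topological) order and maintain, for each nonterminal $X$, a composition system $\dA_X$ (convertible to an SLP in polynomial time by Hagenah's theorem) whose value $\nu_X$ is the normal form of $\eval(X)$ in $G_1 * G_2$. For a terminal production $X \to a$, set $\nu_X = a$. For a binary production $X \to YZ$, I would concatenate $\dA_Y$ and $\dA_Z$ and then cancel at the junction: while the rightmost block of $\nu_Y$ and the leftmost block of $\nu_Z$ share an alphabet $\Sigma_i^{\pm 1}$ and their product is trivial in $G_i$ (tested by a single call to the oracle $\CWP(G_i)$), remove both blocks and proceed one layer further in; if the last matching-alphabet pair has a nontrivial product, merge it into a single block of $\nu_X$. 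Block boundaries inside each $\dA_X$ are located by a bottom-up traversal of the grammar that records the leftmost/rightmost alphabet type of each subword, and SLPs for individual blocks are extracted via composition-system slicing together with Hagenah's theorem. At the initial nonterminal $S$ the answer is ``yes'' iff $\nu_S = \varepsilon$.

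The main obstacle is that a single junction step may, a priori, require exponentially many successful cancellations, since the number of blocks in $\nu_Y$ (or $\nu_Z$) can be exponential in $|\dA|$. I expect this to be the hard part of the proof, and would handle it by an amortized analysis in the spirit of \cite{LoSchl07,Schl06}: by carefully sharing the composition systems $\dA_X$ across the bottom-up pass and by maintaining polynomial-size invariants on them (so that the total size of all $\dA_X$ stays polynomial in $|\dA|$), the total number of oracle calls and the total work remain polynomial, yielding the desired reduction $\CWP(G_1 * G_2) \leq_T^P \{\CWP(G_1), \CWP(G_2)\}$.
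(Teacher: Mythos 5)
First, a point of reference: the paper does not prove this theorem itself --- it is imported from \cite{LoSchl07} --- so your proposal can only be judged against the argument given there. Your overall architecture (a bottom-up pass attaching to every nonterminal a composition system for the normal form of the word it derives, with cancellation performed at each junction $X \to YZ$, and Hagenah's conversion back to SLPs) is the right skeleton and matches the known proof.

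The gap is the junction step, and the repair you sketch does not close it. Your while-loop makes one oracle call per cancelled pair of blocks, and the number of blocks cancelling at a \emph{single} junction can be exponential in $|\dA|$: with $G_1=\langle a\rangle$ and $G_2=\langle b\rangle$ infinite cyclic, $\eval(Y)=(ab)^{2^n}$ and $\eval(Z)=(b^{-1}a^{-1})^{2^n}$ are normal forms produced by SLPs of size $O(n)$, and $2^{n+1}$ blocks cancel at the topmost junction. No amortized analysis can rescue this, because these cancellations are not charged against any polynomially bounded resource accumulated earlier; already in the free-group case \cite{Schl06} exponential cancellation at one junction is unavoidable and is handled not by amortization but by computing the longest common prefix of two compressed strings in one shot. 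What is missing is the analogous device for free products: (i) observe that every block occurring in any $\nu_X$ is the value of one of polynomially many ``block nonterminals'' (each production $X\to YZ$ creates at most one new block, the merged one at the junction) --- this is exactly the observation this paper makes for the $u_i$ in Section~\ref{S abstracting}; (ii) precompute, with polynomially many oracle calls to $\CWP(G_1)$ and $\CWP(G_2)$, which pairs of block nonterminals of the same type represent equal, respectively mutually inverse, elements of the factor group; (iii) after canonically renaming blocks by their equivalence class, obtain the cancellation depth $d$ as the longest common prefix of two compressed strings over this finite block alphabet, computable in polynomial time via Plandowski's algorithm \cite{Pla94} and binary search, with no further oracle calls. (A direct binary search for $d$ does not work as stated either: testing a candidate $d$ pair-by-pair is again exponential, while testing it as a single equation ``suffix times prefix $=1$'' is a $\CWP(G_1*G_2)$ query, which is circular.) Without (i)--(iii) or an equivalent substitute, the claimed polynomial bound on the number of oracle calls is not established.
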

For our reduction of the compressed word problem of an HNN-extension
to the compressed word problem of the base group, we need the special
case that in (\ref{THE_hnn_extension}) we have $H = A_1 = \cdots = A_n = B_1 =
\cdots = B_n$ (in particular, $H$ is finite). 
In this case, we can even assume that the finite group $H$
(represented by its multiplication table) is part of the input:

\begin{lemma}\label{cwp semidirect}
The following problem can be solved in polynomial time:

\medskip

\noindent
INPUT: A finite group $H$,
automorphisms $\varphi_i : H \to H$ ($1 \leq i \leq n$),
and an SLP $\dA$ over the alphabet
$H \cup \{t_1, t_1^{-1}, \ldots t_n, t_n^{-1} \}$.

\smallskip

\noindent
QUESTION: $\eval(\dA)=1$ in  $\langle H, t_1, \ldots, t_n \mid h^{t_i} =
\varphi_i(h) \ (1 \leq i \leq n, h \in H) \rangle$?
\end{lemma}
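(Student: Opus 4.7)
The plan is to exploit the fact that the group $G$ is the semidirect product $H \rtimes F$, where $F = F(\{t_1, \ldots, t_n\})$ is the free group on the stable letters, acting on $H$ via $t_i \mapsto \varphi_i^{-1} \in \Aut(H)$ (since the defining relation $t_i^{-1} h t_i = \varphi_i(h)$ gives $t_i h t_i^{-1} = \varphi_i^{-1}(h)$). Every element of $G$ therefore has a unique normal form $h \cdot w$ with $h \in H$ and $w$ freely reduced over $\{t_1^{\pm 1}, \ldots, t_n^{\pm 1}\}$, so $\eval(\dA) = 1$ in $G$ if and only if both its $H$-part and its $F$-part are trivial; the two conditions can be tested independently.

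For the $F$-part I would apply Lemma~\ref{L transducer} with a transducer that deletes all letters from $H$ and copies the $t_i^{\pm 1}$ unchanged, producing in polynomial time an SLP $\dB$ for $\pi_t(\eval(\dA))$. Using the polynomial-time algorithm for the compressed word problem in a finitely generated free group \cite{Schl06}, I then check whether $\eval(\dB) = 1$ in $F$, rejecting if not.

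For the $H$-part, assume $\dA$ is in Chomsky normal form and compute bottom up, for every nonterminal $X$, a pair $(h_X, \psi_X) \in H \times \Aut(H)$ satisfying $\eval(X) = h_X \cdot w_X$ in $G$ and $\psi_X(h) = w_X h w_X^{-1}$. Terminals yield $(h, \mathrm{id})$ for $h \in H$ and $(1, \varphi_i^{\mp 1})$ for $t_i^{\pm 1}$; a production $X \to YZ$ propagates as
\[
h_X = h_Y \cdot \psi_Y(h_Z), \qquad \psi_X = \psi_Y \circ \psi_Z,
\]
because $h_Y w_Y h_Z w_Z = h_Y (w_Y h_Z w_Y^{-1}) w_Y w_Z$ in $G$. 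Representing each automorphism as a table $H \to H$, every update costs time polynomial in $|H|$, so the whole sweep runs in polynomial time. Output ``yes'' iff $h_S = 1$ in $H$, where $S$ is the initial nonterminal.

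The only delicate point is that the bottom-up pass must track $\psi_X$ and not merely $h_X$: the word $w_X$ may be exponentially long, but its conjugation action on $H$ is a single element of the finite group $\Aut(H)$, which is exactly the compact representative needed to update $h_X$ at each production. Correctness then follows directly from the semidirect-product normal form, and composing the transducer step, the call to $\CWP(F) \in \mathsf{P}$, and the linear bottom-up sweep yields a polynomial-time decision procedure.
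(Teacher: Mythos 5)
Your proof is correct and follows essentially the same route as the paper: decompose $G$ as the semidirect product $H \rtimes F(t_1,\ldots,t_n)$, test the free-group part via the projection SLP, and compute the $H$-component bottom-up over the productions by tracking the conjugation action of the $t$-part as an element of the finite group $\Aut(H)$. The only (cosmetic) difference is that you carry the pair $(h_X,\psi_X)$ in a single pass, whereas the paper computes $h_X$ bottom-up and recovers the needed automorphism separately from an SLP for $\pi_t(\eval(C))$; both are valid and run in polynomial time.
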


\begin{proof}
Let $s \in (H \cup \{t_1, t_1^{-1}, \ldots t_n,
t_n^{-1} \})^*$. From the defining equations of the group
$G = \langle H, t_1, \ldots, t_n \mid h^{t_i} =
\varphi_i(h) \ (1 \leq i \leq n, h \in H) \rangle$ it follows
that there exists a unique $h \in H$ with $s = \pi_t(s) h$ in $G$.
Hence, $s=1$ in $G$ if and only if $\pi_t(s) = 1$ in the free
group $F(t_1,\ldots,t_n)$ and $h=1$ in $H$.

Now, let $\dA$ be an SLP over the alphabet
$H \cup \{t_1, t_1^{-1}, \ldots t_n, t_n^{-1} \}$. W.l.o.g. assume that
$\dA$ is in Chomsky normal form. 
It is straightforward to compute an SLP for the projection $\pi_t(\eval(\dA))$.
Since by Theorem~\ref{T free} the word problem for the free group
$F(t_1,\ldots,t_n)$ can be solved in polynomial time, it suffices
to compute for every nonterminal $A$ of $\dA$ the unique $h_A \in H$
with $\eval(A) = \pi_t(\eval(A)) h_A$ in $G$.
We compute the elements $h_A$ bottom up. The case that the right-hand
side for $A$ is a terminal symbol from 
$H \cup \{t_1, t_1^{-1}, \ldots t_n, t_n^{-1} \}$
is clear. Hence, assume that $A \to BC$ is a production of $\dA$ and 
assume that $h_B, h_C \in H$ are already computed.
Hence, in $G$ we have:
$$
\eval(A) = \eval(B) \eval(C) = \pi_t(\eval(B)) h_B \pi_t(\eval(C)) h_C.
$$
Thus, it suffices to compute the unique $h \in H$ with 
$h_B \pi_t(\eval(C)) = \pi_t(\eval(C))  h$ in $G$. 
Note that if 
$\pi_t(\eval(C)) = t_{i_1}^{\alpha_1}t_{i_2}^{\alpha_2} \cdots
t_{i_n}^{\alpha_n}$, then
$$
h = \varphi_{i_n}^{\alpha_n}( \cdots 
\varphi_{i_2}^{\alpha_2}(\varphi_{i_1}^{\alpha_1}(h_B)) \cdots ) = 
(\varphi_{i_1}^{\alpha_1} \circ \cdots \circ
\varphi_{i_n}^{\alpha_n})(h_B).
$$
The automorphism $f = \varphi_{i_1}^{\alpha_1} \circ \cdots
\circ \varphi_{i_n}^{\alpha_n}$ can be easily computed
from an SLP $\dC$ for the string $\pi_t(\eval(C))$
by replacing in $\dC$ the terminal symbol $t_i$ (resp. $t_i^{-1}$) by
$\varphi_i$ (resp. $\varphi_i^{-1}$). This allows to compute 
$f$ bottom-up and then to compute $f(h_B)$.
\qed
\end {proof}
Note that the group 
$\langle H, t_1, \ldots, t_n \mid h^{t_i} =
\varphi_i(h) \ (1 \leq i \leq n, h \in H) \rangle$ is the semidirect
product $H \rtimes_{\varphi} F$,
where $F = F(t_1, \ldots, t_n)$ is the free
group generated by $t_1, \ldots, t_n$ and the 
homomorphism 
$\varphi : F \to \text{Aut}(H)$ is defined 
by $\varphi(t_i) = \varphi_i$.

\section{Compressed word problem of an HNN-extension} \label{S CWP HNN}

In this section we will prove that the compressed word 
problem for an HNN-extension of the form (\ref{HNN}) 
is polynomial time Turing-reducible to the compressed word problem for $H$.
In fact, we will prove the existence of such a reduction for 
a slightly more general problem, which we introduce below.

For the further consideration, let us fix the finitely generated
group $H$ together with the finite subgroups $A$ and $B$.
Let $\Sigma$ be a finite generating set for $H$.
These data are fixed, i.e., they will not belong to the input
of computational problems. 

In the following, when writing down a multiple HNN-extension
\begin{equation}\label{mult HNN}
\langle H, t_1, \ldots, t_n \mid a^{t_i} = \varphi_i(a) \ (1 \leq i \leq n, a \in A) \rangle,
\end{equation}
we assume implicitly that 
every $\varphi_i$ is in fact an isomorphism between  
subgroups $A_1 \leq A$ and $B_1 \leq B$. Hence,
$\varphi_i$ can be viewed as a {\em partial} isomorphism from our 
fixed subgroup $A$ to our fixed subgroup $B$, and
(\ref{mult HNN}) is in fact an abbreviation for the group
$$\langle H, t_1, \ldots, t_n \mid a^{t_i} = \varphi_i(a) \ (1 \leq i \leq n, 
a \in \dom(\varphi_i)) \rangle.
$$
Note that there is only a fixed
number of partial isomorphisms from $A$ to $B$, but we 
allow $\varphi_i = \varphi_j$ for $i \neq j$ in (\ref{mult HNN}).

Let us introduce several restrictions and extensions of $\CWP(G)$.
Our most general problem is the following computational problem
$\UCWP(H,A,B)$ (the letter ``U'' stands for ``uniform'', meaning that
a list of partial isomorphisms from $A$ to $B$ is part of the input):

\medskip

\noindent
INPUT:  Partial isomorphisms $\varphi_i : A \to B$ ($1 \leq i \leq n$)
and an SLP $\dA$ over the alphabet
$\Sigma^{\pm 1} \cup \{t_1, t_1^{-1}, \ldots, t_n, t_n^{-1} \}$.

\smallskip

\noindent
QUESTION: $\eval(\dA)=1$ in $\langle H, t_1, \ldots, t_n \mid a^{t_i} =
\varphi_i(a) \ (1 \leq i \leq n, a \in A) \rangle$?

\medskip

\noindent
The restriction of this problem $\UCWP(H,A,B)$ 
to reduced input strings is 
denoted by $\RUCWP(H,A,B)$. It is formally defined as 
the following problem:

\medskip

\noindent
INPUT:  Partial isomorphisms $\varphi_i : A \to B$ ($1 \leq i \leq n$)
and SLPs $\dA,\dB$ over the alphabet
$\Sigma^{\pm 1} \cup \{t_1, t_1^{-1}, \ldots, t_n, t_n^{-1} \}$
such that $\eval(\dA), \eval(\dB) \in \Red(H, \varphi_1, \ldots, \varphi_n)$.

\smallskip

\noindent
QUESTION: $\eval(\dA)=\eval(\dB)$ in $\langle H, t_1, \ldots, t_n \mid 
a^{t_i} = \varphi_i(a) \ (1 \leq i \leq n, a \in A) \rangle$?

\medskip

\noindent
Let us now consider a fixed list of partial isomorphisms
$\varphi_1,\ldots,\varphi_n : A \to B$. Then
$\RCWP(H,A,B, \varphi_1,\ldots, \varphi_n)$  is the following computational problem:

\medskip

\noindent
INPUT: Two SLPs $\dA$ and $\dB$ over the alphabet
$\Sigma^{\pm 1} \cup \{t_1, t_1^{-1}, \ldots, t_n, t_n^{-1} \}$
such that $\eval(\dA), \eval(\dB) \in \Red(H, \varphi_1, \ldots, \varphi_n)$.

\smallskip

\noindent
QUESTION: $\eval(\dA)=\eval(\dB)$ in $\langle H, t_1, \ldots, t_n \mid a^{t_i} = \varphi_i(a)\ (1 \leq i
\leq n, a \in A) \rangle$?

\medskip

\noindent
Our main result is:

\begin{theorem}\label{T real main}
$\UCWP(H,A,B) \leq_P^T \CWP(H)$.
\end{theorem}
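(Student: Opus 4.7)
The plan is to build a chain of polynomial-time Turing reductions
\[
\UCWP(H,A,B) \;\leq_T^P\; \RUCWP(H,A,B) \;\leq_T^P\; \bigcup_{k\leq \delta}\RCWP(H,A,B,\varphi_1,\ldots,\varphi_k) \;\leq_T^P\; \CWP(H),
\]
where $\delta$ is a constant depending only on $|A|$ (so the composition has constant length and remains a polynomial-time Turing reduction). The three arrows correspond to the three reduction stages advertised in the introduction.

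For the first arrow I would transform the input SLP $\dA$ (in Chomsky normal form) into equivalent SLPs producing \emph{reduced} words, building them bottom up. For a production $X\to YZ$, once reduced SLPs for $\eval(Y)$ and $\eval(Z)$ have been constructed, Lemma~\ref{lemma reducing} describes the resulting reduced word in terms of the cancellation depth $d(u,v)$ and the connecting element $c(u,v)$. Both can be located by a binary search on the stable-letter positions of the two compressed words, using Lemma~\ref{L transducer} and the polynomial-time SLP equality test, with oracle calls to $\CWP(H)$ to verify candidate equalities $\cdots = c$ in the base group. Equality of the original SLPs in $G$ then becomes equality of the two reduced SLPs, i.e.\ an instance of $\RUCWP(H,A,B)$.

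For the second arrow the number $n$ of stable letters can be cut down to the constant $\delta$ equal to the number of distinct partial isomorphisms $A \to B$: stable letters sharing the same $\varphi$ can be coalesced at the price of an extra free-group factor, whose contribution is absorbed through Theorem~\ref{T free}. For the third arrow (the bulk of the paper), $\RCWP$ with a fixed list of $\delta$ partial isomorphisms and associated subgroups $A,B$ is reduced to two strictly simpler instances: $\RCWP$ with $\delta-1$ stable letters, and $\RCWP$ with strictly smaller associated subgroups $A'\lneq A$, $B'\lneq B$. Iterating this at most $\delta+|A|$ times, one reaches either $n=0$, which is $\CWP(H)$ itself, or the trivial case $A=B=\{1\}$, in which case $G\simeq H\ast F(t_1,\ldots,t_n)$ and $\CWP(G)\leq_T^P \CWP(H)$ by Theorem~\ref{T free}.

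The hard part will be the third step. By Lemma~\ref{equality reduced}, equality of two reduced words is witnessed by a sequence $c_1,\ldots,c_{2\ell}$ of connecting elements from the finite set $A\cup B$ together with equalities $u_k c_{2k+1}=c_{2k}v_k$ in $H$. Since $\ell$ can be exponential in the SLP size, one cannot simply guess the entire connecting sequence. The key idea is to process the SLPs recursively and summarise each subtree by only a constant amount of information (essentially the finitely many achievable left/right connecting-element profiles in $A\cup B$), invoking the $\CWP(H)$ oracle only on the $H$-labelled pieces, which are themselves SLP-compressed. Controlling this recursion so that the associated subgroup or the stable-letter count actually decreases at each round — rather than growing — is the main technical obstacle.
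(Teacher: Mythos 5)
Your three-stage roadmap matches the paper's, but the heart of the matter --- how the third reduction is actually carried out --- is missing, and you say so yourself (``the main technical obstacle''). The sketch you offer in its place (summarising each SLP subtree by a constant amount of connecting-element information) does not obviously work: the connecting elements $c_1,\ldots,c_{2m}$ of Lemma~\ref{equality reduced} tie together the $k$-th $t$-block of $\eval(\dA)$ with the $k$-th $t$-block of $\eval(\dB)$, and these pairings do not respect the derivation trees of the two (independently structured) SLPs, so a subtree-local dynamic program has nothing to recurse on. The idea you are missing is the one in Sections~\ref{S abstracting}--\ref{S transform into HNN}: after splitting off the stable letter $t_1$ with the largest domain $A_1=\dom(\varphi_1)$, one observes that the words $u_i$ (and $v_i$) between consecutive occurrences of $t_1^{\pm 1}$, though exponentially many as occurrences, comprise only \emph{polynomially many distinct strings}, each named by a nonterminal of a polynomial-size composition system. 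One then introduces a fresh free generator $Z$ for each such block, computes with the oracle for $\RCWP(H,A,B,\varphi_2,\ldots,\varphi_k)$ the complete (polynomial-size) set $\Emc$ of relations $Z_1c_1=c_2Z_2$ with $c_1,c_2\in A_1\cup B_1$ that hold among the blocks, and proves (via Van Kampen diagrams) that equality in $G_0$ is equivalent to equality in $\langle F(W_K)*A_1*B_1,t\mid \Emc,\ a^t=\varphi_1(a)\rangle$. A chain of Tietze transformations then eliminates $B_1$, $t$, and all but finitely many $Z$'s, leaving an HNN-extension whose base group is the \emph{finite} group $A_1$ with strictly smaller associated subgroups; Lemma~\ref{cwp semidirect} handles the stable letters whose domain is all of $A_1$. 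Without this block-abstraction step there is no mechanism forcing the parameters $(k,|A|)$ to decrease, which is exactly what your write-up identifies but does not supply.

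Two further points would need repair even in the parts you do describe. In the first arrow, the candidate equalities (\ref{gl c}) arising in the binary search are equalities of \emph{reduced strings containing stable letters}, so they are instances of $\RUCWP(H,A,B)$, not of $\CWP(H)$; querying only $\CWP(H)$ there is not enough (and the reduction is a Turing reduction to $\RUCWP$ precisely for this reason). In the second arrow, simply coalescing stable letters $t_i,t_j$ with $\varphi_i=\varphi_j$ destroys reducedness (a reduced factor $t_i^{-1}wt_j$ becomes a forbidden $t^{-1}wt$), and the quotient you obtain is not a free product that Theorem~\ref{T free} can absorb; the paper instead keeps \emph{two} copies $t_{i,0},t_{i,1}$ of each distinct isomorphism and alternates them by parity of position, which is why $\delta=2\cdot|A|!\cdot 2^{|A|}$ rather than $|A|!\cdot 2^{|A|}$.
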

The rest of Section~\ref{S CWP HNN} 
is concerned with the proof of Theorem~\ref{T real main}.

\subsection{Reducing to reduced sequences} \label{S reduced}

First we show that we may restrict ourselves to SLPs that evaluate to reduced
strings:

\begin{lemma} \label{L reduced}
$\UCWP(H,A,B) \leq_P^T \RUCWP(H,A,B)$. More precisely, there is a 
polynomial time Turing-reduction from 
$\UCWP(H,A,B)$ to $\RUCWP(H,A,B)$ that on input 
$(\varphi_1,\ldots, \varphi_n, \dA)$ only asks
$\RUCWP(H,A,B)$-queries of the form
$(\varphi_1,\ldots, \varphi_n, \dA', \dB')$
(thus, the list of partial isomorphisms is not changed).
\end{lemma}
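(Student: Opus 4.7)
The plan is a bottom-up pass on $\dA$ (put in Chomsky normal form), maintaining a single grammar in which, for each nonterminal $X$ of $\dA$, a new nonterminal $[X]$ generates a reduced word equal to $\eval(\dA,X)$ in the HNN-extension~$G$. A terminal production $X \to a$ is trivially reduced. For a production $X \to Y Z$, already-built reduced SLPs $\dB_Y, \dB_Z$ yield reduced words $u = \eval(\dB_Y)$ and $v = \eval(\dB_Z)$, and I build $\dB_X$ by computing $d(u,v)$ and $c(u,v)$ of Lemma~\ref{lemma reducing} and concatenating (the prefix of $u$ through $u_{n-d(u,v)}$) $\cdot$ (the short $\Sigma^{\pm 1}$-word spelling $c(u,v)$) $\cdot$ (the suffix of $v$ starting with $v_{d(u,v)}$). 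Prefixes, suffixes, reversals and concatenations of SLP-compressed strings are routine composition-system operations, converted back to SLPs by Hagenah's theorem. After processing the start symbol $S$ of $\dA$, one final $\RUCWP(H,A,B)$-query comparing $\dB_S$ with an SLP for $\varepsilon$ decides the instance. Throughout, the list $\varphi_1,\ldots,\varphi_n$ handed to the oracle is unchanged, as required.

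The crux is computing $d(u,v)$ and $c(u,v)$ using only $\RUCWP$-queries. I claim that the set of $d \in \{0,\ldots,\min(\ell,m)\}$ satisfying conditions~(a) and~(b) of Lemma~\ref{lemma reducing} is a downward-closed interval $[0, d(u,v)]$. Britton's lemma applied to the middle word $s_d = t_{i_{n-d+1}}^{\alpha_{n-d+1}} u_{n-d+1} \cdots t_{i_n}^{\alpha_n} u_n v_0 t_{j_1}^{\beta_1} \cdots v_{d-1} t_{j_d}^{\beta_d}$ shows that, since $u$ and $v$ are each reduced, every pinch of $s_d$ must lie at the $u$-$v$ junction, and pinches are removed one pair at a time from inside out. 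Consequently, (b) at $d^{*}$ forces $i_{n-k+1} = j_k$ and $\alpha_{n-k+1} = -\beta_k$ for every $k \leq d^{*}$, which in turn yields (a) at every $d \leq d^{*}$ and, applying the same inner chain of pinch reductions to $s_d$, gives an element of $A_{j_d}(-\beta_d)$, i.e., (b) at every $d \leq d^{*}$. Monotonicity then lets me locate $d(u,v)$ by binary search over $\{0,\ldots,\min(\ell,m)\}$ in $O(|\dA|)$ oracle rounds, since $\ell, m \leq 2^{O(|\dA|)}$.

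For the test at a fixed $d$, I extract the $t$-letters $t_{i_{n-d+1}}^{\alpha_{n-d+1}}$ from $\dB_Y$ and $t_{j_d}^{\beta_d}$ from $\dB_Z$ by standard SLP indexing after counting $t$-letters per nonterminal, check~(a) directly, and for~(b) enumerate the finitely many candidates $c \in A_{j_d}(-\beta_d)$. Writing $s_d = p_d q_d$, where $p_d$ is the suffix of $u$ starting at $t_{i_{n-d+1}}^{\alpha_{n-d+1}}$ and $q_d$ is the prefix of $v$ ending at $t_{j_d}^{\beta_d}$, the equation ``$s_d = c$ in $G$'' is equivalent to ``$p_d = c\, q_d^{-1}$ in $G$''. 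Both sides are reduced words: $p_d$ and $q_d$ are factors of reduced words, the inverse $q_d^{-1}$ is reduced since the forbidden pattern is closed under inversion, and prepending $c \in (\Sigma^{\pm 1})^*$ to a reduced word creates no new pinch. Hence each candidate $c$ is disposed of by a single $\RUCWP(H,A,B)$-query on polynomial-size SLPs built by the composition-system operations above, and the $c$ that succeeds at $d = d(u,v)$ is the desired $c(u,v)$.

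The main obstacle I foresee is the rigorous justification of the Britton-lemma monotonicity argument, since $s_d$ itself is generally \emph{not} a reduced word and so Lemmas~\ref{equality reduced} and~\ref{lemma reducing} do not apply to it verbatim; the intermediate partially-reduced forms of $s_d$ produced by successive pinch removals have to be tracked by hand. Controlling the sizes of the intermediate SLPs is a secondary concern, handled by having $[X]$'s productions reference $[Y]$ and $[Z]$ directly so that each production of $\dA$ contributes only polynomially many new rules.
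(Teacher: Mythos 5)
Your proposal is correct and follows essentially the same route as the paper: a bottom-up pass over the Chomsky-normal-form SLP, maintaining a composition system whose nonterminals evaluate to reduced words, with $d(u,v)$ located by binary search (using the downward-closedness of condition (b), which the paper likewise derives from Lemma~\ref{equality reduced}) and $c(u,v)$ found by testing the constantly many candidates via $\RUCWP$-queries on the reduced strings $p_d^{-1}c$ and $q_d$ (your $p_d = c\,q_d^{-1}$ is the same test transposed). The paper's proof replaces the production $X \to YZ$ by $X \to Y[:p(\ell-d+1)-1]\,c\,Z[q(d)+1:]$ exactly as you describe, so no further comparison is needed.
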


\begin{proof}
Consider partial isomorphisms $\varphi_i : A \to B$ ($1 \leq i \leq n$)
and let 
$$G = \langle H, t_1, \ldots, t_n \mid 
a^{t_i} = \varphi_i(a) \ (1 \leq i \leq n, a \in A) \rangle.$$
Moreover, let $\dA$ be an 
SLP in Chomsky normal form over the alphabet
$\Sigma^{\pm 1} \cup \{t_1, t_1^{-1}, \ldots, t_n, t_n^{-1} \}$.
Using oracle access to $\RUCWP(H,A,B)$,
we will construct bottom-up a {\em composition system}
$\dA'$ with $\eval(\dA')=\eval(\dA)$ in $G$ and
$\eval(\dA')$ reduced, on which finally 
the $\RUCWP(H,A,B)$-oracle can be
asked whether $\eval(\dA')=1$ in $G$.
The system $\dA'$ has the same variables as $\dA$ but
for every variable $X$, $\eval(\dA',X)$ is reduced
and $\eval(\dA',X)=\eval(\dA,X)$ in $G$. 

Assume that $X \to YZ$ is a production of $\dA$, where
$Y$ and $Z$ were already processed during our bottom-up
reduction process. Hence, $\eval(Y)$ and $\eval(Z)$ are reduced.
Let 
$$
\eval(Y)  =  u_0 t_{i_1}^{\alpha_1} u_1  \cdots  t_{i_\ell}^{\alpha_\ell} u_\ell 
\quad \text{and} \quad 
\eval(Z)  =  v_0 t_{j_1}^{\beta_1} v_1  \cdots t_{j_m}^{\beta_m} v_m.
$$
with $u_0,\ldots,u_\ell, v_0,\ldots,v_m \in (\Sigma^{\pm 1})^*$.
For $1 \leq k \leq \ell$ (resp. $1 \leq k \leq m$) let
$p(k)$ (resp. $q(k)$) be the unique position within $\eval(Y)$
(resp. $\eval(Z)$) such that 
$\eval(Y)[:p(k)] =  u_0 t_{i_1}^{\alpha_1} u_1  \cdots  t_{i_k}^{\alpha_k}$
(resp. $\eval(Z)[:q(k)] = v_0 t_{j_1}^{\beta_1} v_1  \cdots
t_{j_k}^{\beta_k}$). These positions can be computed in polynomial time
from $k$ using simple arithmetic.

According to Lemma~\ref{lemma reducing} it suffices to find
$d=d(\eval(Y), \eval(Z)) \in \mathbb{N}$ 
and $c=c(\eval(Y),\eval(Z)) \in  A \cup B$
in polynomial time.
This can be done, using binary search:
First, compute $\min\{l,m\}$.
For a given number $k \leq \min\{\ell,m\}$ we want to check
whether 
\begin{eqnarray}
\label{gl p}
t_{i_{\ell-k+1}}^{\alpha_{\ell-k+1}} u_{\ell-k+1}  \cdots  
t_{i_\ell}^{\alpha_\ell} \, u_\ell\, v_0\, t_{j_1}^{\beta_1} \cdots
v_{k-1} t_{j_k}^{\beta_k} \in A_{i_{\ell-k+1}}(\alpha_{\ell-k+1}) = A_{j_k}(-\beta_k)
\end{eqnarray}
in the group $G$. Note that (\ref{gl p}) is equivalent to
$t_{i_{\ell-k+1}}^{\alpha_{\ell-k+1}} = t_{j_k}^{-\beta_k}$ and 
\begin{eqnarray}
\label{gl c}
\bigvee_{c \in A_{j_k}(-\beta_k)} 
\eval(Y)[p(\ell-k+1):]^{-1} c = \eval(Z)[:q(k)] .
\end{eqnarray}
The two sides of this equation are reduced strings and the 
number of possible values $c \in A_{j_k}(-\beta_k)$ is bounded by
a constant. Hence, (\ref{gl c}) is equivalent to a 
constant number of $\RUCWP(H,A,B)$-instances that can
be computed in polynomial time.

In order to find with binary search
the value $d$ (i.e. the largest $k \geq 0$ 
such that (\ref{gl p}) holds), one has to observe 
that (\ref{gl p}) implies that (\ref{gl p}) also holds
for every smaller value $k$ (this follows from Lemma~\ref{equality reduced}).
{}From $d$, we can compute in polynomial
time positions $p(\ell-d+1)$ and $q(d)$. Then, according to 
Lemma~\ref{lemma reducing}, the string
$$\eval(Y)[:p(\ell-d+1)-1]\, c\, \eval(Z)[q(d)+1:]$$ 
is reduced and equal
to $\eval(Y)\eval(Z)$ in $G$.
Hence, we can replace the production $X \to YZ$ by
$X \to Y[:p(\ell-d+1)-1]\, c\, Z[q(d)+1:]$.
\qed
\end{proof}
The above proof can be also used in order to derive:

\begin{lemma} \label{L reduced 2}
Let $\varphi_1,\ldots,\varphi_n : A \to B$ 
be fixed partial isomorphisms. Then
$\CWP(\langle H, t_1, \ldots, t_n \mid a^{t_i} = \varphi_i(a)\ (1 \leq i
\leq n, a \in A) \rangle)$ is polynomial time Turing-reducible to 
$\RCWP(H,A,B, \varphi_1,\ldots, \varphi_n)$.
\end{lemma}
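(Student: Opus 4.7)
The plan is to reuse the reduction of Lemma~\ref{L reduced} essentially verbatim, exploiting the fact that the partial isomorphisms $\varphi_1,\ldots,\varphi_n$ are now fixed. The crucial observation, which the author emphasizes in the statement of Lemma~\ref{L reduced}, is that the polynomial time Turing reduction from $\UCWP(H,A,B)$ to $\RUCWP(H,A,B)$ constructed in the previous proof never alters the list of partial isomorphisms when making oracle queries: every query has the form $(\varphi_1,\ldots,\varphi_n,\dA',\dB')$ with the same $\varphi_i$'s as in the input. Therefore, once $\varphi_1,\ldots,\varphi_n$ are fixed in the lemma statement, each oracle query to $\RUCWP(H,A,B)$ is in fact just an instance of $\RCWP(H,A,B,\varphi_1,\ldots,\varphi_n)$.

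Concretely, I would argue as follows. Let $G = \langle H, t_1,\ldots,t_n \mid a^{t_i} = \varphi_i(a) \ (1\leq i\leq n, a\in A)\rangle$ and let $\dA$ be an input SLP in Chomsky normal form over $\Sigma^{\pm 1} \cup \{t_1,t_1^{-1},\ldots,t_n,t_n^{-1}\}$. Process the nonterminals of $\dA$ bottom up, maintaining a composition system $\dA'$ on the same set of nonterminals such that for each variable $X$, $\eval(\dA',X)$ is reduced and equal to $\eval(\dA,X)$ in $G$. For a production $X \to YZ$ with $\eval(\dA',Y)$ and $\eval(\dA',Z)$ already reduced, apply Lemma~\ref{lemma reducing}: determine $d = d(\eval(\dA',Y),\eval(\dA',Z))$ and $c = c(\eval(\dA',Y),\eval(\dA',Z))$. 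Both are computed exactly as in the proof of Lemma~\ref{L reduced}: the value $d$ is found by binary search, and each candidate test reduces (using Lemma~\ref{equality reduced}) to a constant number of equality tests between reduced words obtained by slicing $\eval(\dA',Y)$ and $\eval(\dA',Z)$ through composition-system productions and multiplying by a constant element $c \in A \cup B$. These equality tests are precisely $\RCWP(H,A,B,\varphi_1,\ldots,\varphi_n)$ queries, since the isomorphisms are fixed. Replace the production $X \to YZ$ in $\dA'$ by $X \to Y[:p(\ell-d+1)-1]\,c\,Z[q(d)+1:]$.

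After processing every production, $\dA'$ is a composition system whose start variable evaluates to a reduced string equal to $\eval(\dA)$ in $G$. Apply Hagenah's polynomial time algorithm to convert $\dA'$ into an equivalent SLP $\dA''$, and let $\dB$ be a trivial SLP generating the empty word (which is reduced). Then $\eval(\dA) = 1$ in $G$ iff $\eval(\dA'') = \eval(\dB)$ in $G$, which is a single $\RCWP(H,A,B,\varphi_1,\ldots,\varphi_n)$ query. There is no genuine obstacle here beyond verifying that every oracle call in the proof of Lemma~\ref{L reduced} indeed reuses the fixed tuple $(\varphi_1,\ldots,\varphi_n)$; once this is noted, the same reduction trivially fits the signature of $\RCWP$.
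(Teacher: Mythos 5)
Your proposal is correct and matches the paper's intent exactly: the paper derives this lemma by observing that the proof of Lemma~\ref{L reduced} never changes the list of partial isomorphisms in its oracle queries, so with $\varphi_1,\ldots,\varphi_n$ fixed every query is an $\RCWP(H,A,B,\varphi_1,\ldots,\varphi_n)$ instance. Your elaboration (including the final single query against a trivial SLP for the empty word and the use of Hagenah's algorithm to turn composition systems into SLPs) is just the spelled-out version of the same argument.
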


\subsection{Reduction to a constant number of stable letters} 
\label{S constant number}

In this section, we show that the number of different
stable letters can be reduced to a constant. For this, it is 
important to note that the associated
subgroups $A, B \leq H$ do not belong to the input; so their size is 
a fixed constant.

Fix the constant $\delta = 2 \cdot |A|! \cdot 2^{|A|}$ for the rest
of the paper. Note that the number of HNN-extensions of the form 
$\langle H, t_1, \ldots, t_k \mid a^{t_i} = \psi_i(a)\ (1 \leq i \leq k, a \in A) \rangle$
with $k \leq \delta$ is constant. 
The following lemma says that $\RUCWP(H,A,B)$ can be reduced 
in polynomial time to one of the problems $\RCWP(H,A,B,\psi_1,\ldots,\psi_k)$.
Moreover, we can determine in polynomial time, which of these problems
arises.

\begin{lemma} \label{L reducing to constant}
There exists a polynomial time algorithm for the following:

\medskip

\noindent
INPUT:  Partial isomorphisms $\varphi_1, \ldots, \varphi_n : 
A \to B$ and SLPs $\dA,\dB$ over the alphabet
$\Sigma^{\pm 1} \cup \{t_1, t_1^{-1}, \ldots t_n, t_n^{-1} \}$
such that $\eval(\dA), \eval(\dB) \in \Red(H, \varphi_1, \ldots, \varphi_n)$.

\smallskip

\noindent
OUTPUT: Partial isomorphisms $\psi_1,\ldots,\psi_k : A \to B$ 
where $k \leq \delta$ and SLPs $\dA'$, $\dB'$ over
the alphabet $\Sigma^{\pm 1} \cup \{t_1, t_1^{-1}, \ldots t_k, t_k^{-1} \}$
such that:
\begin{itemize}
\item For every $1 \leq i \leq k$ there exists $1 \leq j \leq n$
with $\psi_i = \varphi_j$.
\item $\eval(\dA'), \eval(\dB') \in \Red(H,\psi_1,\ldots,\psi_k)$
\item $\eval(\dA)=\eval(\dB)$ in $\langle H, t_1, \ldots, t_n \mid a^{t_i} = 
\varphi_i(a)\ (1 \leq i \leq n, a \in A) \rangle$ if and only if
$\eval(\dA')=\eval(\dB')$ in $\langle H, t_1, \ldots, t_k 
\mid a^{t_i} = \psi_i(a)\ (1 \leq i \leq k, a \in A) \rangle$.
\end{itemize}
\end{lemma}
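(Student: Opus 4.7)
My plan is to reduce the $n$ stable letters to a constant by a two-colour renaming scheme, preceded by a single SLP equality test on the $t$-projections. From $\dA$ and $\dB$ I first extract SLPs for $\pi_t(\eval(\dA))$ and $\pi_t(\eval(\dB))$ (by erasing the $\Sigma^{\pm 1}$ terminals in each production) and test their equality as strings using Plandowski's algorithm \cite{Pla94}. If the projections disagree, Lemma~\ref{equality reduced} already forces $\eval(\dA) \neq \eval(\dB)$ in $G$, so I return a trivially unequal reduced pair over one new stable letter: $k=1$, $\psi_1 = \varphi_1$, $\eval(\dA') = \tau_1$, $\eval(\dB') = \varepsilon$. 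Assume henceforth that the two projections coincide.

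Let $W = \{\varphi_1, \ldots, \varphi_n\}$, so $|W| \leq |A|! \cdot 2^{|A|} = \delta/2$. For every $\varphi \in W$ and every colour $c \in \{1,2\}$ I introduce a new stable letter $\tau_{\varphi, c}$ with associated partial isomorphism $\varphi$; enumerated as $\tau_1, \ldots, \tau_k$ with $k \leq \delta$, this gives the list $\psi_1, \ldots, \psi_k$, and each $\psi_i$ equals some $\varphi_j$ by construction. I then apply the same deterministic rational transducer $T$ to $\dA$ and $\dB$: $T$ leaves every $\Sigma^{\pm 1}$-letter untouched, and on reading $t_i^\alpha$ it emits $\tau_{\varphi_i, c}^\alpha$, where $c$ depends on a state that remembers the pair $(\varphi, c)$ emitted for the previous $t$-letter -- if the stored $\varphi$ differs from $\varphi_i$ (or the state is initial) then $c = 1$, and if the stored pair is $(\varphi_i, c')$ then $c = 3 - c'$. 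Since $T$ has $O(|W|)$ states, Lemma~\ref{L transducer} yields SLPs $\dA'$ and $\dB'$ in polynomial time.

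By construction any two $\tau$-letters that are consecutive in the $\tau$-projection of $\eval(\dA')$ or $\eval(\dB')$ carry different indices -- either because their underlying partial isomorphisms differ, or because the colour was forced to flip -- so no factor of the form $\tau_{\varphi, c}^{-\beta}\, w\, \tau_{\varphi, c}^\beta$ with $w \in (\Sigma^{\pm 1})^*$ appears, and both $\eval(\dA')$ and $\eval(\dB')$ lie in $\Red(H, \psi_1, \ldots, \psi_k)$. Because $T$ is driven only by the $t$-letters and we forced $\pi_t(\eval(\dA)) = \pi_t(\eval(\dB))$, the $\tau$-projections of $\eval(\dA')$ and $\eval(\dB')$ also coincide; since the partial isomorphism attached to $\tau_{\varphi, c}$ is $\varphi$, Lemma~\ref{equality reduced} applied in $G$ and in the new HNN-extension unfolds in both cases to the very same system of connecting-element equations in $H$, giving the required biconditional. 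The delicate point -- and the reason for using two colours, which contribute the factor $2$ in $\delta$ -- is that the naive renaming $t_i \mapsto \tau_{\varphi_i}$ can turn a factor $t_i^{-\alpha}\, w\, t_j^\alpha$ with $i \neq j$ (reduced in $G$ for any $w$) into a non-reduced factor $\tau^{-\alpha}\, w\, \tau^\alpha$ whenever $h(w) \in A(\alpha)$, a condition we cannot decide without a $\CWP(H)$ oracle; the two-colour alternation eliminates every such situation purely syntactically, without any membership test in $A$.
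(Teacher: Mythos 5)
Your proposal is correct and follows essentially the same route as the paper: collapse the $\varphi_i$ to their distinct values, attach two copies (``colours'') of a stable letter to each, rename via a deterministic rational transducer (invoking Lemma~\ref{L transducer}), pre-check $\pi_t(\eval(\dA))=\pi_t(\eval(\dB))$ with Plandowski's algorithm and output a trivially unequal reduced pair if it fails, and justify the biconditional via Lemma~\ref{equality reduced}. The only differences are cosmetic: the paper alternates the colour by global position parity rather than flipping only on a repeated isomorphism, and phrases the equivalence argument in terms of Van Kampen diagrams rather than the connecting-element equations directly.
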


\begin{proof}
Fix an input $(\varphi_1, \ldots, \varphi_n, \dA, \dB)$ for the problem
$\RUCWP(H,A,B)$. In particular, $\eval(\dA), \eval(\dB) \in \Red(H,\varphi_1,\ldots,\varphi_n)$.
Define the function $\tau : \{1, \ldots, n\} \to  \{1, \ldots, n\}$ by 
$$
\tau(i) = \min \{ k \mid \varphi_k =
\varphi_i\}.
$$
This mapping can be easily computed in polynomial time from the sequence
$\varphi_1, \ldots, \varphi_n$. Assume w.l.o.g. that 
$\ran(\tau) = \{1, \ldots,\gamma\}$ for some $\gamma \leq n$.
Note that $\gamma \leq |A|! \cdot 2^{|A|} = \frac{\delta}{2}$.
For every $t_i$ ($1 \leq i \leq \gamma$)
we take two stable letters $t_{i,0}$ and $t_{i,1}$.
Hence, the total number of stable letters is at most $\delta$.
Moreover,
we define a sequential transducer $T$ which, reading as input the word
$u_0 t_{i_1}^{\alpha_1} u_1  \cdots  t_{i_m}^{\alpha_m} u_m$ 
(with $u_0,\ldots,u_m \in (\Sigma^{\pm 1})^+$ and 
$1 \leq i_1,\ldots,i_m \leq n$)  returns
$$
[\![ T ]\!](w) = u_0 \,t_{\tau(i_1),1}^{\alpha_1}\, u_1\, t_{\tau(i_2),0}^{\alpha_2}\, u_2\,
t_{\tau(i_3),1}^{\alpha_3}\, u_3 \cdots t_{\tau(i_m), m \text{ mod } 2}^{\alpha_m}\, u_m.
$$
Finally, we define the HNN-extension
$$
G' = \langle H, t_{1,0},t_{1,1},\ldots, t_{\gamma,0}, t_{\gamma,1} \mid  
a^{t_{i,k}} =\varphi_i(a) \
(1 \leq i \leq \gamma, k \in \{0,1\}, a \in A) \rangle.
$$
This HNN-extension has $2 \gamma \leq \delta$ many stable letters; it 
is the HNN-extension $\langle H, t_1, \ldots, t_k \mid a^{t_i} = \psi_i(a)\ (1 \leq i \leq
k, a \in A) \rangle$ from the lemma.

\medskip

\noindent
{\bf Claim:} Let 
$u,v  \in \Red(H, \varphi_1, \ldots, \varphi_n)$ be reduced. Then also
$[\![ T ]\!](u)$ and $[\![ T ]\!](v)$ are reduced.
Moreover, the following are equivalent:
\begin{enumerate}[(a)]
\item $u = v$ in  $\langle H, t_1, \ldots, t_n \mid a^{t_i} =
  \varphi_i(a)\ (1 \leq i \leq n, a \in A) \rangle$
\item $[\![ T ]\!](u) = [\![ T ]\!](v)$ in the HNN-extension $G'$ and 
$\pi_t(u)=\pi_t(v)$.    
\end{enumerate}

\noindent
{\it Proof of the claim.}
Let $u = u_0 t_{i_1}^{\alpha_1} u_1  \cdots  t_{i_\ell}^{\alpha_\ell} u_\ell$ and
$v = v_0 t_{j_1}^{\beta_1} v_1  \cdots t_{j_m}^{\beta_m} v_m$.
The first statement is obvious due to the fact that $[\![ T ]\!](u)$ does
not contain a subword of the form $t^\alpha_{i,k} w t^\beta_{j,k}$
for $k \in \{0,1\}$, and similarly for $[\![ T ]\!](v)$.

For $(a)\Rightarrow (b)$ note that by Lemma~\ref{equality reduced}, $u=v$ in 
$\langle H, t_1, \ldots, t_n \mid a^{t_i} = \varphi_i(a)\ (1 \leq i \leq n, a \in A) \rangle$ implies
$\pi_t(u)=\pi_t(v)$ (i.e.
$\ell=m$, $\alpha_1 = \beta_1, \ldots, \alpha_m=\beta_m$, 
$i_1 = j_1, \ldots, i_m=j_m$), and 
that there exists a Van Kampen diagram of the following form:
\begin{center}
\setlength{\unitlength}{.9mm}
\begin{picture}(115,26)(0,-3)
\gasset{Nframe=n,AHnb=1,ELdist=0.5}
\gasset{Nw=.9,Nh=.9,Nfill=y}
\put(120,9){$(\dagger)$}
\drawpolygon[fillgray=0.92](0,10)(15,16)(15,4)
\drawpolygon[fillgray=0.75](15,4)(15,16)(25,18)(25,2)
\drawpolygon[fillgray=0.92](25,2)(25,18)(40,20)(40,0)
\drawpolygon[fillgray=0.75](40,0)(40,20)(50,20)(50,0)
\drawpolygon[fillgray=0.92](50,0)(50,20)(65,20)(65,0)
\drawpolygon[fillgray=1](65,0)(65,20)(75,20)(75,0)
\drawpolygon[fillgray=1](90,2)(90,18)(75,20)(75,0)
\drawpolygon[fillgray=0.75](100,4)(100,16)(90,18)(90,2)
\drawpolygon[fillgray=0.92](115,10)(100,16)(100,4)
\node(0)(0,10){}
\node(a)(15,16){}
\node(b)(25,18){}
\node(c)(40,20){}
\node(d)(50,20){}
\node(e)(65,20){}
\node(g)(90,18){}
\node(h)(100,16){}
\node(i)(115,10){}
\node(a')(15,4){}
\node(b')(25,2){}
\node(c')(40,0){}
\node(d')(50,0){}
\node(e')(65,0){}
\node(g')(90,2){}
\node(h')(100,4){}
\drawedge(0,a){$u_0$}
\drawedge(a,b){$t_{i_1}^{\alpha_1}$}
\drawedge(b,c){$u_1$}
\drawedge(c,d){$t_{i_2}^{\alpha_2}$}
\drawedge(d,e){$u_2$}
\drawedge(g,h){$t_{i_m}^{\alpha_m}$}
\drawedge(h,i){$u_m$}
\drawedge[ELside=r](0,a'){$v_0$}
\drawedge[ELside=r](a',b'){$t_{i_1}^{\alpha_1}$}
\drawedge[ELside=r](b',c'){$v_1$}
\drawedge[ELside=r](c',d'){$t_{i_2}^{\alpha_2}$}
\drawedge[ELside=r](d',e'){$v_2$}
\drawedge[curvedepth=.5,dash={.5}1](e,g){}
\drawedge[curvedepth=-.5,dash={.5}1](e',g'){}
\drawedge[ELside=r](g',h'){$t_{i_m}^{\alpha_m}$}
\drawedge[ELside=r](h',i){$v_m$}
\drawedge(a,a'){$c_1$}
\drawedge[ELpos=50](b,b'){$c_2$}
\drawedge[ELpos=50](c,c'){$c_3$}
\drawedge[ELpos=50](d,d'){$c_4$}
\drawedge[ELpos=50](e,e'){$c_5\;\, \cdots$}
\drawedge[ELpos=50,ELside=r](g,g'){$c_{2m-1}$}
\drawedge[ELpos=50](h,h'){$c_{2m}$}
\end{picture}
\end{center}
The defining equations of $G'$ imply that 
the following is a valid Van Kampen diagram in $G'$:
\begin{center}
\setlength{\unitlength}{.9mm}
\begin{picture}(115,30)(0,-5)
\gasset{Nframe=n,AHnb=1,ELdist=0.5}
\gasset{Nw=.9,Nh=.9,Nfill=y}
\put(120,9){$(\ddagger)$}
\drawpolygon[fillgray=0.92](0,10)(15,16)(15,4)
\drawpolygon[fillgray=0.75](15,4)(15,16)(25,18)(25,2)
\drawpolygon[fillgray=0.92](25,2)(25,18)(40,20)(40,0)
\drawpolygon[fillgray=0.75](40,0)(40,20)(50,20)(50,0)
\drawpolygon[fillgray=0.92](50,0)(50,20)(65,20)(65,0)
\drawpolygon[fillgray=1](65,0)(65,20)(75,20)(75,0)
\drawpolygon[fillgray=1](90,2)(90,18)(75,20)(75,0)
\drawpolygon[fillgray=0.75](100,4)(100,16)(90,18)(90,2)
\drawpolygon[fillgray=0.92](115,10)(100,16)(100,4)
\node(0)(0,10){}
\node(a)(15,16){}
\node(b)(25,18){}
\node(c)(40,20){}
\node(d)(50,20){}
\node(e)(65,20){}
\node(g)(90,18){}
\node(h)(100,16){}
\node(i)(115,10){}
\node(a')(15,4){}
\node(b')(25,2){}
\node(c')(40,0){}
\node(d')(50,0){}
\node(e')(65,0){}
\node(g')(90,2){}
\node(h')(100,4){}
\drawedge(0,a){$u_0$}
\drawedge(a,b){$t_{\tau(i_1),1}^{\alpha_1}$}
\drawedge(b,c){$u_1$}
\drawedge(c,d){$t_{\tau(i_2),0}^{\alpha_2}$}
\drawedge(d,e){$u_2$}
\drawedge[ELpos=90](g,h){$t_{\tau(i_m), m \text{ mod } 2}^{\alpha_m}$}
\drawedge(h,i){$u_m$}
\drawedge[ELside=r](0,a'){$v_0$}
\drawedge[ELside=r](a',b'){$t_{\tau(i_1),1}^{\alpha_1}$}
\drawedge[ELside=r](b',c'){$v_1$}
\drawedge[ELside=r](c',d'){$t_{\tau(i_2),0}^{\alpha_2}$}
\drawedge[ELside=r](d',e'){$v_2$}
\drawedge[curvedepth=.5,dash={.5}1](e,g){}
\drawedge[curvedepth=-.5,dash={.5}1](e',g'){}
\drawedge[ELside=r,ELpos=90](g',h'){$t_{\tau(i_m),m \text{ mod } 2}^{\alpha_m}$}
\drawedge[ELside=r](h',i){$v_m$}
\drawedge(a,a'){$c_1$}
\drawedge[ELpos=50](b,b'){$c_2$}
\drawedge[ELpos=50](c,c'){$c_3$}
\drawedge[ELpos=50](d,d'){$c_4$}
\drawedge[ELpos=50](e,e'){$c_5\;\, \cdots$}
\drawedge[ELpos=50,ELside=r](g,g'){$c_{2m-1}$}
\drawedge[ELpos=50](h,h'){$c_{2m}$}
\end{picture}
\end{center}
Hence, $[\![ T ]\!](u) = [\![ T ]\!](v)$ in $G'$.

For $(b)\Rightarrow (a)$ note that we have already seen
that $[\![ T ]\!](u)$ and $[\![ T ]\!](v)$ are reduced.
Hence, $[\![ T ]\!](u) = [\![ T ]\!](v)$ in $G'$ together with
$\pi_t(u)=\pi_t(v)$ 
implies that there exists a Van Kampen diagram of the form $(\ddagger)$.
Again, we can replace the dark-shaded $t$-faces by the corresponding
$t$-faces of $G$ in order to obtain a diagram of the form $(\dagger)$.  
This proofs the claim.

\medskip

\noindent
By the previous claim, $[\![ T ]\!](\eval(\dA))$ and $[\![ T ]\!](\eval(\dB))$ are reduced.
Moreover, SLPs $\dA'$ and $\dB'$
for these strings can be computed in polynomial time by
Lemma~\ref{L transducer}. In case $\pi_t(\eval(\dA)) \neq \pi_t(\eval(\dB))$
we choose these SLPs such that e.g. $\eval(\dA')=t_1$ and 
$\eval(\dB')=t^{-1}_1$. Hence, $\eval(\dA') = \eval(\dB')$ in $G'$
if and only if 
$\eval(\dA) = \eval(\dB)$ in 
$\langle H, t_1, \ldots, t_n \mid a^{t_i} =\varphi_i(a) (1 \leq i \leq n, a
\in A) \rangle$.
This proves the lemma.
\qed
\end{proof}
Due to Lemma~\ref{L reducing to constant} it suffices to concentrate our effort on 
problems of the form $\RCWP(H,A,B,\varphi_1,\ldots, \varphi_k)$,
where $k\leq\delta$. Let
\begin{equation} \label{new G}
G_0 = \langle H, t_1, \ldots, t_k \mid a^{t_i} = \varphi_i(a)\ (1 \leq i
\leq k, a \in A) \rangle
\end{equation}
and let us choose $i \in \{1, \ldots, k\}$
such that $|\dom(\varphi_i)|$ is maximal. W.l.o.g. assume that $i=1$. 
Let $\dom(\varphi_1) = A_1 \leq A$ and 
$\ran(\varphi_1) = B_1 \leq B$.
We write $t$ for $t_1$ in the following and define $$\Gamma = \Sigma \cup \{t_2, \ldots, t_k\}.$$
We can write our HNN-extension $G_0$ from (\ref{new G}) as
\begin{equation} \label{HNN over K}
G_0 = \langle K, t \mid  a^ t = \varphi_1(a) \ (a \in A_1) \rangle,
\end{equation}
where
\begin{equation} \label{base K}
K = \langle H, t_2, \ldots, t_k \mid a^{t_i} = \varphi_i(a)\ (2 \leq i \leq k,
a \in A) \rangle.
\end{equation}
The latter group $K$ is generated by $\Gamma$. 
The goal of the next three Sections~\ref{S abstracting}--\ref{S transform into HNN} is to prove:

\begin{lemma} \label{L main}
$\RCWP(H,A,B,\varphi_1,\ldots, \varphi_k)$ is polynomial time Turing-reducible 
to the problems
$\RCWP(H,A,B,\varphi_2,\ldots,\varphi_k)$ and $\RUCWP(A_1,A_1,A_1)$.
\end{lemma}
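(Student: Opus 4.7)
The plan is to view $G_0$ as the single-stable-letter HNN-extension $G_0 = \langle K, t \mid a^t = \varphi_1(a)\ (a \in A_1)\rangle$ from (\ref{HNN over K}) and apply Lemma~\ref{equality reduced}. This reduces the equality $\eval(\dA) = \eval(\dB)$ in $G_0$ to two sub-tasks: a $\pi_t$-equality test and, assuming the projections agree, deciding whether a chain of connecting elements in the constant-size set $A_1 \cup B_1$ exists subject to local $K$-equations. Word problems in $K$ are answered by the first oracle, since Lemma~\ref{L reduced 2} gives $\CWP(K) \leq_T^P \RCWP(H,A,B,\varphi_2,\ldots,\varphi_k)$; the existential threading of connecting elements is then to be cast as a query to $\RUCWP(A_1,A_1,A_1)$.

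First I would verify that the inputs $u := \eval(\dA)$ and $v := \eval(\dB)$, which are reduced in $\Red(H,\varphi_1,\ldots,\varphi_k)$, are also reduced in the $\langle K,t\rangle$-sense of Section~\ref{hnn_extensions}. Writing $u = u_0 t^{\alpha_1} u_1 \cdots t^{\alpha_\ell} u_\ell$ with each $u_k \in (\Gamma^{\pm 1})^*$, a hypothetical forbidden factor $t^{-\alpha} u_k t^{\alpha}$ in the $\langle K,t\rangle$-sense would require $u_k$ to represent an element of $A_1(\alpha) \leq H$ in $K$. If $u_k$ is a pure $H$-word this directly contradicts reducedness for $\varphi_1$; otherwise $u_k$ contains some $t_j$ with $j \geq 2$ and is itself reduced in the HNN-structure of $K$, so Lemma~\ref{equality reduced} applied to $K$ precludes $u_k$ from representing any element of $H \leq K$. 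The $\pi_t$-equality test is then a standard SLP manipulation (projection plus SLP equality).

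Assuming the common projection is $t^{\alpha_1} \cdots t^{\alpha_\ell}$, write $u = u_0 t^{\alpha_1} u_1 \cdots t^{\alpha_\ell} u_\ell$ and $v = v_0 t^{\alpha_1} v_1 \cdots t^{\alpha_\ell} v_\ell$. By Lemma~\ref{equality reduced}, $u = v$ in $G_0$ iff there exist $c_1,\ldots,c_{2\ell} \in A_1 \cup B_1$ (with $c_0 = c_{2\ell+1} = 1$) satisfying $c_{2k} = \varphi_1^{\alpha_k}(c_{2k-1})$ and $u_k c_{2k+1} = c_{2k} v_k$ in $K$ for every $k$. Each segment $(u_k, v_k)$ imposes only a constant-size relation on the pair $(c_{2k}, c_{2k+1}) \in A_1(-\alpha_k) \times A_1(\alpha_{k+1})$ (decidable via a constant number of $K$-word-problem instances), and these local relations, threaded together by the partial isomorphism $\varphi_1^{\alpha_k}$ at each stable-letter position, form precisely a word problem in a suitable HNN-extension of the finite group $A_1$. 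Since the pool of partial isomorphisms of $A_1$ is of constant size, this is an instance of $\RUCWP(A_1,A_1,A_1)$.

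The main obstacle, which occupies Sections~\ref{S abstracting}--\ref{S transform into HNN}, is producing this encoding in polynomial time even though the segments $u_k, v_k$ are only implicit in the SLPs $\dA, \dB$ and may be exponentially many. The approach is a bottom-up pass over $\dA, \dB$ (in Chomsky normal form): maintain at each nonterminal a constant-size abstraction of how its value threads connecting elements across its two boundaries, and at a production $X \to YZ$ combine the abstractions of $Y$ and $Z$ using a constant number of $\RCWP(H,A,B,\varphi_2,\ldots,\varphi_k)$-queries (via Lemma~\ref{L reduced 2}) to resolve the boundary $K$-equations. The result is a polynomial-size SLP over the generators of the auxiliary HNN-extension of $A_1$, on which a single $\RUCWP(A_1,A_1,A_1)$-query decides whether a valid chain of connecting elements exists, which by Lemma~\ref{equality reduced} is equivalent to $u = v$ in $G_0$.
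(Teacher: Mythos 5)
Your overall architecture matches the paper's: view $G_0$ as $\langle K, t \mid a^t = \varphi_1(a)\ (a\in A_1)\rangle$, check the $t$-projections, use Lemma~\ref{equality reduced} to reduce equality in $G_0$ to the existence of a chain of connecting elements in $A_1 \cup B_1$, answer the local $K$-equations via Lemma~\ref{L reduced 2} with the oracle $\RCWP(H,A,B,\varphi_2,\ldots,\varphi_k)$, and encode the threading as a compressed word problem over an HNN-extension of the finite group $A_1$. The preliminary observations (the inputs remain reduced with respect to $(K,\varphi_1)$; the projection test is a standard SLP computation) are also fine.

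The gap is in the step you yourself identify as the main obstacle. The constraint $u_k c_{2k+1} = c_{2k} v_k$ couples the $k$-th segment of $\eval(\dA)$ with the $k$-th segment of $\eval(\dB)$ --- two different SLPs whose derivation trees are in no way aligned. A bottom-up pass that attaches a ``constant-size abstraction'' to each nonterminal of $\dA$ (or of $\dB$) and combines abstractions at productions $X \to YZ$ cannot compute these cross-SLP relations: the set of admissible pairs $(c_{2k}, c_{2k+1})$ depends jointly on $u_k$ and $v_k$, hence is not a function of any single nonterminal of either grammar, so there is nothing meaningful to propagate within one SLP. What actually makes the argument work is a different observation: each of $\dA$ and $\dB$ can be refactored in polynomial time into a two-level form generating $X_0 t^{\alpha_1} X_1 \cdots t^{\alpha_m} X_m$, where the segment-generating nonterminals all come from a set $W_K$ that is itself constructed in polynomial time and hence has polynomial size. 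Consequently there are only polynomially many \emph{distinct} segments, so the complete relation set $\Emc$ of all valid identities $Z_1 c_1 = c_2 Z_2$ with $Z_1,Z_2 \in W_K$ and $c_1,c_2 \in A_1 \cup B_1$ can be computed once and for all with polynomially many oracle queries; after Tietze transformations eliminating $B_1$ and $t$, the segment names themselves become the stable letters of an HNN-extension over the base group $A_1$, yielding the desired $\UCWP(A_1,A_1,A_1)$-instance. Without this polynomial bound on the number of distinct segments --- or some substitute for it --- your reduction has no way to produce the auxiliary HNN-extension and the SLPs over its generators in polynomial time.
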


\subsection{Abstracting from the base group $K$} \label{S abstracting}

Our aim in this subsection will be to reduce the compressed word problem for $G_0$ to
the compressed word problem for another group, where we have abstracted
from most of the concrete structure of the base group $K$ in (\ref{base K}).

Let us consider an input $(\dA,\dB)$ for $\RCWP(H,A,B,\varphi_1,\ldots,
\varphi_k)$ with $k \leq \delta$. W.l.o.g. assume that $k = \delta$.
Thus, $\dA$ and $\dB$ are SLPs over the alphabet
$\Sigma^{\pm 1} \cup \{t_1,t_1^{-1},\ldots, t_\delta,t_\delta^{-1}\} =
\Gamma^{\pm 1} \cup \{t,t^{-1}\}$
with $\eval(\dA), \eval(\dB) \in
\Red(H,\varphi_1,\ldots,\varphi_\delta)$.
Hence, we also have $\eval(\dA), \eval(\dB) \in \Red(K,\varphi_1)$.

W.l.o.g. we may assume that $\pi_t(\eval(\dA)) = \pi_t(\eval(\dB))$.
This property can be checked in polynomial time using Plandowski's
algorithm \cite{Pla94}, and if it is not satisfied then
we have $\eval(\dA) \neq \eval(\dB)$ in $G_0$.
Hence, there are $m \geq 0$,
$\alpha_1, \ldots, \alpha_m \in \{1,-1\}$, and
strings $u_0,v_0 \ldots, u_m,v_m \in (\Gamma^{\pm 1})^*$ such that
\begin{eqnarray}
\eval(\dA) & = & u_0 t^{\alpha_1} u_1  \cdots t^{\alpha_m} u_m  
\text{ and } \label{SLP A} \\
\eval(\dB) & = & v_0 t^{\alpha_1} v_1  \cdots t^{\alpha_m} v_m. \label{SLP B}
\end{eqnarray}
One might think that the number of different words $u_i$ (resp. $v_i$)
may grow exponentially in the size
of $\dA$ (resp. $\dB$). But we will see that this is actually not the case. 

Let us replace every occurrence of $t^\alpha$ ($\alpha\in \{1,-1\}$) in 
$\dA$ and $\dB$ by $a a ^{-1} t^\alpha a a^{-1}$, where $a \in \Gamma$ is arbitrary.
This is to ensure that any two occurrences of symbols from 
$\{t, t^{-1} \}$ are separated by 
a non-empty word over $\Gamma^{\pm 1}$, i.e., we can 
assume that $u_0,v_0,\ldots,u_m,v_m \in (\Gamma^{\pm 1})^+$
in (\ref{SLP A}) and (\ref{SLP B}).

Our first goal is to transform $\dA$ (and similarly $\dB$)
into an equivalent SLP that generates in a first phase
a string of the form $X_0 t^{\alpha_1} X_1  \cdots t^{\alpha_m} X_m$,
where $X_i$ is a further variable that generates in 
a second phase the string $u_i \in (\Gamma^{\pm 1})^+$. 
Assume that $\dA = (U, \{t,t^{-1}\} \cup \Gamma^{\pm1}, S,P)$ is in Chomsky
normal form.

In a first step, we remove every variable $X \in U$ from $\dA$ 
such that $X \to t$ or $X \to t^{-1}$ is a production of $\dA$ by
replacing $X$ in all right-hand sides of $\dA$ by $t$ or
$t^{-1}$, respectively.
Now, all productions of $\dA$ are of the form
$X \to YZ$, $X \to t^\alpha Z$, $X \to Y t^\alpha$, or
$X \to x \in \Gamma^{\pm 1}$, where $Y,Z \in U$.   

Next we split the set $U$ of variables of $\dA$ into two parts:
$$
U^0_K = \{ X \in U \mid \eval(X) \in (\Gamma^{\pm1})^+ \}
\qquad \text{and} \qquad
U^0_t = U \setminus U_K^0.
$$
Let $P^0_K$ (resp. $P^0_t$) 
be the set of all productions from $P$ with a left-hand
side in $U^0_K$ (resp. $U^0_t$).
The subscript $K$ refers to the fact that every nonterminal from $U^0_K$
defines an element from the new base group $K \leq G_0$, whereas the subscript 
$t$ refers to the fact that every nonterminal from $U^0_t$
generates a string where $K$-generators as well as $t$ or
$t^{-1}$ occurs.

Now we manipulate all productions from $P^0_t$ in a bottom-up
process, which adds further variables and productions to $U^0_K$ and
$P^0_K$, respectively. The set $U^0_t$ will not change in the process.
After stage $i$, we have production sets $P^i_t$ and 
$P^i_K$, and the set of left-hand sides of $P^i_t$
(resp. $P^i_K$) is $U^0_t$ (resp. $U^i_K$).
The system
$\dA^i_t := (U^0_t, \{t,t^{-1}\} \cup U^i_K, S, P^i_t)$
is a {\em composition system} that generates a string
from $(U^i_K)^+ t^{\alpha_1} (U^i_K)^+  \cdots t^{\alpha_m}
(U^i_K)^+$.

In stage $i+1$ we do the following: Consider a production
$(X \to u) \in P^i_t$ such that every variable in $u$
is already processed, but $X$ is not yet processed.
If $u$ is of the form $t^\alpha Z$ or $Y t^\alpha$, then there is nothing to do.
Now assume that $u = YZ$ such that
$Y$ and $Z$ are already processed.
Consider the last symbol 
$\omega \in \{t,t^{-1}\} \cup U^i_K$ of $\eval(\dA^i_t,Y)$ and the first 
symbol $\alpha \in \{t,t^{-1}\} \cup U^i_K$ of 
$\eval(\dA^i_t,Z)$ (these symbols can be computed
in polynomial time after stage $i$).
If either $\omega \in \{t,t^{-1}\}$ or $\alpha \in \{ t,t^{-1}\}$, 
then again nothing is to do.
Otherwise, $\omega, \alpha \in U^i_K$.
We now set $U^{i+1}_K = U^i_K \cup \{X'\}$, where $X'$ is 
a fresh variable, and $P^{i+1}_K = P^i_K \cup \{ X' \to \omega
\alpha\}$. Finally, we obtain $P^{i+1}_t$ from
$P^{i}_t$ by replacing the production 
$X \to YZ$ by $X \to Y[:\ell-1] X' Z[2:]$. Here
$\ell = |\eval(\dA^i_t,Y)|$. 

After the last stage, we transform the final composition system
$\dA^k_t$ (where $k$ is the number of stages)
into an equivalent SLP, let us denote this SLP by $\dA_t$.
Moreover, write $U_K$ and $P_K$ for 
$U^k_K$ and $P^k_K$.
The construction implies that
\begin{equation} \label{A_K,t}
\eval(\dA_t) = X_0 t^{\alpha_1} X_1  \cdots t^{\alpha_m} X_m
\end{equation}
with $X_0,\ldots,X_m \in U_K$ and 
$\eval(U_K, \Gamma^{\pm 1}, X_i, P_K) = u_i$.
Note that the number of different $X_i$ is polynomially bounded,
simply because the set $U_K$ was computed in polynomial time.
Hence, also the number of different $u_i$ in (\ref{SLP A})
is polynomially bounded.

For the SLP $\dB$ the same procedure yields the following data:
\begin{itemize}
\item An SLP $\dB_t$ such that
$$\eval(\dB_t) = Y_0 t^{\alpha_1} Y_1  \cdots t^{\alpha_m} Y_m.$$
\item A set of productions $Q_K$ with left-hand sides
$V_K$, where $\{Y_1,\ldots,Y_m\} \subseteq V_K$ and 
$\eval(V_K, \Gamma^{\pm 1}, Y_i, Q_K) = v_i$.
\end{itemize}
W.l.o.g. assume that $U_K \cap V_K = \emptyset$.
Let $W_K = U_K \cup V_K$ and 
$R_K = P_K \cup Q_K$.
In the following, for $Z \in W_K$ we write
$\eval(Z)$ for $\eval(W_K, \Gamma^{\pm 1}, Z, R_K) \in (\Gamma^{\pm 1})^+$.

Let us next consider the free product $F(\Wgamma) * A_1 * B_1$.
Recall that $A_1$ (resp. $B_1$) is the domain (resp. range)
of the partial isomorphism $\varphi_1$.
Clearly, in this free product, $A_1$ and $B_1$ have trivial
intersection (even if $A_1 \cap B_1 > 1$ in $H$). 
We now define a set of defining relations 
$\Emc$ by
\begin{multline} \label{rel E}
\Emc = \{ Z_1 c_1 = c_2 Z_2 \mid 
Z_1, Z_2 \in \Wgamma, c_1,c_2 \in A_1 \cup B_1, \\  
\eval(Z_1)\, c_1 = c_2\, \eval(Z_2) \text{ in the group } K \}.
\end{multline}
We can compute the set $\Emc$ in polynomial time using oracle access to
$\CWP(K)$ or alternatively, by Lemma~\ref{L reduced 2}, 
using oracle access to $\RCWP(H,A,B,\varphi_2,\ldots,\varphi_k)$. 
This is the only time, where we need oracle access 
to $\RCWP(H,A,B,\varphi_2,\ldots,\varphi_k)$ in Lemma~\ref{L main}.

Consider the group
\begin{eqnarray*}\label{G1}
G_1 & = &  \langle (F(\Wgamma) * A_1 * B_1)/N(\Emc), t\, \mid\, 
a^t = \varphi_1(a)\ (a \in A_1) \rangle \\
& = & \langle F(\Wgamma) * A_1 * B_1, t\, \mid\, \Emc, \,
t^{-1} a t = \varphi_1(a)\, (a \in A_1) \rangle .
\end{eqnarray*}
Recall that $N(\Emc) \leq F(\Wgamma) * A_1 * B_1$ is the smallest normal subgroup
of $F(\Wgamma) * A_1 * B_1$ that contains all elements $xy^{-1}$ with $(x=y) \in \Emc$.
We can define a morphism 
$$\psi : F(\Wgamma) * A_1 * B_1  \to K$$ by 
$\psi(Z) = \eval(Z)$
for  $Z \in \Wgamma$, $\psi(a)=a$ for $a \in A_1$, and $\psi(b)=b$ for $b \in
B_1$. Of course, the restrictions of $\psi$ to 
$A_1$ as well as $B_1$ are injective.
Moreover, each of the defining relations in $\Emc$ is preserved under $\psi$:
for $(Z_1 c_1 = c_2 Z_2) \in \Emc$ we have 
$\psi(Z_1 c_1) = \eval(Z_1)\, c_1 = c_2\, \eval(Z_2) = \psi(c_2 Z_2)$ 
in $K$. Thus, $\psi$ defines a morphism  
$$\widehat{\psi}: (F(\Wgamma) * A_1 * B_1)/N(\Emc) \to K.$$
Moreover, $A_1 \cap N(\Emc) = 1$: if $a \in N(\Emc) \cap A_1$ then $\psi(a) \in
\psi(N(\Emc)) = 1$; thus $a=1$, since $\psi$ is injective on $A_1$.
Similarly, $B_1 \cap N(\Emc) = 1$.  This means that
$A_1$ and $B_1$ can be naturally embedded in $(F(\Wgamma) * A_1 * B_1)/N(\Emc)$
and $\varphi_1 : A_1 \to B_1$ can be considered as an isomorphism between the 
images of this embedding in $(F(\Wgamma) * A_1 * B_1)/N(\Emc)$. 
Therefore, the group $G_1$ is an HNN-extension
with base group $(F(\Wgamma) * A_1 * B_1)/N(\Emc) \leq G_1$. Moreover,
$\widehat{\psi} : (F(\Wgamma) * A_1 * B_1)/N(\Emc) \to K$ 
can be lifted to a morphism 
$$\widehat{\psi} : G_1 \to G_0 = 
\langle K, t \mid  a^ t = \varphi_1(a) \ (a \in A_1) \rangle.$$
The idea for the construction of $G_1$
is to abstract as far as possible from the concrete structure
of the original base group $K$. We only keep those $K$-relations that
are necessary to prove (or disprove) that $\eval(\dA) = \eval(\dB)$ in the group $G_0$.

Note that since $\eval(\dA), \eval(\dB) \in \Red(K,\varphi_1)$, we have
$\eval(\dA_t), \eval(\dB_t) \in \Red((F(\Wgamma) * A_1 * B_1)/N(\Emc), \varphi_1)$:
Consider for instance a factor $t^{-1} X_i t$ of $\eval(\dA_t)$ from
(\ref{A_K,t}). If $X_i = a$ in $(F(\Wgamma) * A_1 * B_1)/N(\Emc)$ for some $a \in A_1$, then
after applying $\widehat{\psi}$ we have $\eval(X_i) = u_i = a$ in $K$. Hence,
$\eval(\dA)$ from (\ref{SLP A}) would not be reduced.

\begin{lemma} \label{Lemma K,t}
The following are equivalent:
\begin{enumerate}[(a)]
\item $\eval(\dA) = \eval(\dB)$ in $G_0$ from (\ref{HNN over K}).
\item $\eval(\dA_t) = \eval(\dB_t)$ in $G_1$
\end{enumerate}
\end{lemma}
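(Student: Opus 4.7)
The plan is to invoke Lemma~\ref{equality reduced} twice, once in each direction, with the morphism $\widehat{\psi}$ handling the easy direction and the relation set $\Emc$ handling the hard one. Direction (b) $\Rightarrow$ (a) will be immediate from $\widehat{\psi} : G_1 \to G_0$: this morphism fixes $t, t^{-1}$ and sends each $X_i$ (resp.\ $Y_i$) to $\eval(X_i) = u_i$ (resp.\ $\eval(Y_i) = v_i$), so it maps $\eval(\dA_t)$ to $\eval(\dA)$ and $\eval(\dB_t)$ to $\eval(\dB)$, and any $G_1$-identity between the former descends to a $G_0$-identity between the latter.

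For (a) $\Rightarrow$ (b), I would start from the fact that $\eval(\dA), \eval(\dB) \in \Red(K,\varphi_1)$ and already share the same $t$-projection (by our reduction to this case). Applying Lemma~\ref{equality reduced} to the HNN-extension $G_0 = \langle K, t \mid a^t = \varphi_1(a)\, (a \in A_1)\rangle$ produces connecting elements $c_1, \ldots, c_{2m}$ with $c_{2k-1} \in A_1(\alpha_k)$, $c_{2k} = \varphi_1^{\alpha_k}(c_{2k-1}) \in A_1(-\alpha_k)$, and
\[
u_k \, c_{2k+1} \;=\; c_{2k} \, v_k \quad \text{in } K, \qquad (0 \leq k \leq m),
\]
where $c_0 = c_{2m+1} = 1$. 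The crucial observation is that each of these $K$-identities is built into $\Emc$ by design: since $u_k = \eval(X_k)$, $v_k = \eval(Y_k)$, and every $c_i$ (including the trivial endpoints, which lie in $A_1 \cap B_1$) belongs to $A_1 \cup B_1$, the relation $X_k \, c_{2k+1} = c_{2k} \, Y_k$ is an instance of (\ref{rel E}) and therefore holds in the base group $(F(\Wgamma) * A_1 * B_1)/N(\Emc)$ of $G_1$.

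With these identities in hand, I would then apply Lemma~\ref{equality reduced} in the opposite direction, to the HNN-extension $G_1$ over this base group. The hypothesis that $\eval(\dA_t)$ and $\eval(\dB_t)$ are reduced in $\Red((F(\Wgamma)*A_1*B_1)/N(\Emc), \varphi_1)$ is exactly what is established in the paragraph preceding the lemma, and reusing the very same sequence $c_1, \ldots, c_{2m}$ as connecting elements (which is legal because $A_1$ and $B_1$ embed into the base group of $G_1$ via the natural map, as already argued) yields $\eval(\dA_t) = \eval(\dB_t)$ in $G_1$. The only real obstacle is to verify that the connecting elements transfer faithfully — i.e.\ that $A_1$, $B_1$ inside $(F(\Wgamma)*A_1*B_1)/N(\Emc)$ really serve as the associated subgroups of $G_1$, and that the boundary case $c_0 = c_{2m+1} = 1$ is absorbed by the definition of $\Emc$; both points are routine from the construction.
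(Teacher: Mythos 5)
Your proposal is correct and follows essentially the same route as the paper: the easy direction via the homomorphism $\widehat{\psi}$, and the converse by extracting connecting elements from Lemma~\ref{equality reduced} applied to $G_0$, recognizing each resulting $K$-relation $u_k c_{2k+1} = c_{2k} v_k$ as an $\Emc$-relation $X_k c_{2k+1} = c_{2k} Y_k$, and reassembling them into an equality in $G_1$. The paper phrases this reassembly in the language of Van Kampen diagrams (replacing the light-shaded $K$-faces by $\Emc$-faces), which is just the diagrammatic form of your second application of Lemma~\ref{equality reduced}.
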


\begin{proof}
For $(b)\Rightarrow (a)$ assume that $\eval(\dA_t) = \eval(\dB_t)$ in
$G_1$. We obtain in $G_0$:
$\eval(\dA) = \widehat{\psi}(\eval(\dA_t)) = \widehat{\psi}(\eval(\dB_t)) = \eval(\dB)$.

For $(a)\Rightarrow (b)$ assume that $\eval(\dA) = \eval(\dB)$ in the group
$G_0$. Since $\eval(\dA)$ and $\eval(\dB)$ are reduced and $\pi_t(\eval(\dA)) =
\pi_t(\eval(\dB))$, we obtain a Van Kampen diagram of the form:
\begin{center}
\setlength{\unitlength}{.9mm}
\begin{picture}(115,26)(0,-3)
\gasset{Nframe=n,AHnb=1,ELdist=0.5}
\gasset{Nw=.9,Nh=.9,Nfill=y}
\drawpolygon[fillgray=0.92](0,10)(15,16)(15,4)
\drawpolygon[fillgray=0.75](15,4)(15,16)(25,18)(25,2)
\drawpolygon[fillgray=0.92](25,2)(25,18)(40,20)(40,0)
\drawpolygon[fillgray=0.75](40,0)(40,20)(50,20)(50,0)
\drawpolygon[fillgray=0.92](50,0)(50,20)(65,20)(65,0)
\drawpolygon[fillgray=1](65,0)(65,20)(75,20)(75,0)
\drawpolygon[fillgray=1](90,2)(90,18)(75,20)(75,0)
\drawpolygon[fillgray=0.75](100,4)(100,16)(90,18)(90,2)
\drawpolygon[fillgray=0.92](115,10)(100,16)(100,4)
\node(0)(0,10){}
\node(a)(15,16){}
\node(b)(25,18){}
\node(c)(40,20){}
\node(d)(50,20){}
\node(e)(65,20){}
\node(g)(90,18){}
\node(h)(100,16){}
\node(i)(115,10){}
\node(a')(15,4){}
\node(b')(25,2){}
\node(c')(40,0){}
\node(d')(50,0){}
\node(e')(65,0){}
\node(g')(90,2){}
\node(h')(100,4){}
\drawedge(0,a){$u_0$}
\drawedge(a,b){$t^{\alpha_1}$}
\drawedge(b,c){$u_1$}
\drawedge(c,d){$t^{\alpha_2}$}
\drawedge(d,e){$u_2$}
\drawedge(g,h){$t^{\alpha_m}$}
\drawedge(h,i){$u_m$}
\drawedge[ELside=r](0,a'){$v_0$}
\drawedge[ELside=r](a',b'){$t^{\alpha_1}$}
\drawedge[ELside=r](b',c'){$v_1$}
\drawedge[ELside=r](c',d'){$t^{\alpha_2}$}
\drawedge[ELside=r](d',e'){$v_2$}
\drawedge[curvedepth=.5,dash={.5}1](e,g){}
\drawedge[curvedepth=-.5,dash={.5}1](e',g'){}
\drawedge[ELside=r](g',h'){$t^{\alpha_m}$}
\drawedge[ELside=r](h',i){$v_m$}
\drawedge(a,a'){$c_1$}
\drawedge[ELpos=50](b,b'){$c_2$}
\drawedge[ELpos=50](c,c'){$c_3$}
\drawedge[ELpos=50](d,d'){$c_4$}
\drawedge[ELpos=50](e,e'){$c_5\;\, \cdots$}
\drawedge[ELpos=50,ELside=r](g,g'){$c_{2m-1}$}
\drawedge[ELpos=50](h,h'){$c_{2m}$}
\end{picture}
\end{center}
In this diagram, we can replace 
every light-shaded face, representing the $K$-relation
$u_i c_{2i+1}  = c_{2i} v_i$, by a face representing the valid
$\Emc$-relation $X_i c_{2i+1} = c_{2i} Y_i$, see (\ref{rel E}).
We obtain the following Van Kampen diagram, which shows
that $\eval(\dA_t) = \eval(\dB_t)$ in $G_1$:
\begin{center}
\setlength{\unitlength}{.9mm}
\begin{picture}(115,26)(0,-3)
\gasset{Nframe=n,AHnb=1,ELdist=0.5}
\gasset{Nw=.9,Nh=.9,Nfill=y}
\put(120,9){$(\bigstar)$}
\drawpolygon[fillgray=0.92](0,10)(15,16)(15,4)
\drawpolygon[fillgray=0.75](15,4)(15,16)(25,18)(25,2)
\drawpolygon[fillgray=0.92](25,2)(25,18)(40,20)(40,0)
\drawpolygon[fillgray=0.75](40,0)(40,20)(50,20)(50,0)
\drawpolygon[fillgray=0.92](50,0)(50,20)(65,20)(65,0)
\drawpolygon[fillgray=1](65,0)(65,20)(75,20)(75,0)
\drawpolygon[fillgray=1](90,2)(90,18)(75,20)(75,0)
\drawpolygon[fillgray=0.75](100,4)(100,16)(90,18)(90,2)
\drawpolygon[fillgray=0.92](115,10)(100,16)(100,4)
\node(0)(0,10){}
\node(a)(15,16){}
\node(b)(25,18){}
\node(c)(40,20){}
\node(d)(50,20){}
\node(e)(65,20){}
\node(g)(90,18){}
\node(h)(100,16){}
\node(i)(115,10){}
\node(a')(15,4){}
\node(b')(25,2){}
\node(c')(40,0){}
\node(d')(50,0){}
\node(e')(65,0){}
\node(g')(90,2){}
\node(h')(100,4){}
\drawedge(0,a){$X_0$}
\drawedge(a,b){$t^{\alpha_1}$}
\drawedge(b,c){$X_1$}
\drawedge(c,d){$t^{\alpha_2}$}
\drawedge(d,e){$X_2$}
\drawedge(g,h){$t^{\alpha_m}$}
\drawedge(h,i){$X_m$}
\drawedge[ELside=r](0,a'){$Y_0$}
\drawedge[ELside=r](a',b'){$t^{\alpha_1}$}
\drawedge[ELside=r](b',c'){$Y_1$}
\drawedge[ELside=r](c',d'){$t^{\alpha_2}$}
\drawedge[ELside=r](d',e'){$Y_2$}
\drawedge[curvedepth=.5,dash={.5}1](e,g){}
\drawedge[curvedepth=-.5,dash={.5}1](e',g'){}
\drawedge[ELside=r](g',h'){$t^{\alpha_m}$}
\drawedge[ELside=r](h',i){$Y_m$}
\drawedge(a,a'){$c_1$}
\drawedge[ELpos=50](b,b'){$c_2$}
\drawedge[ELpos=50](c,c'){$c_3$}
\drawedge[ELpos=50](d,d'){$c_4$}
\drawedge[ELpos=50](e,e'){$c_5 \;\, \cdots$}
\drawedge[ELpos=50,ELside=r](g,g'){$c_{2m-1}$}
\drawedge[ELpos=50](h,h'){$c_{2m}$}
\end{picture}
\end{center}
\qed
\end{proof}
By Lemma~\ref{Lemma K,t}, it remains to check, whether 
$\eval(\dA_t) = \eval(\dB_t)$ in the HNN-extension $G_1$,
where $\eval(\dA_t)$ and $\eval(\dB_t)$ are both reduced.

\subsection{Eliminating $B_1$ and $t$}

By using the identities
$b = t^{-1} \varphi_1^{-1}(b) t$ ($b \in B_1 \setminus \{1\}$)
as Tietze transformations
we can eliminate in the group $G_1$
the generators from $B_1 \setminus \{1\}$.
After this transformation, we may have apart from relations of the form
\begin{equation}\label{def-rel-basic}
Z_1 a_1 = a_2 Z_2 \text{ with } a_1,a_2 \in A_1
\end{equation}
also defining relations of the forms
\begin{eqnarray*}
Z_1  t^{-1} a_1 t  & = & a_2 Z_2 \\
Z_1  a_1              & = & t^{-1} a_2 t Z_2 \\
Z_1  t^{-1} a_1 t  & = & t^{-1} a_2 t Z_2,
\end{eqnarray*}
where $a_1,a_2 \in A_1$.
We can replace these relations by relations of the following types
\begin{eqnarray}
Z_1  t^{-1} a_1   & = & a_2 Z_2 t^{-1} \label{z1}\\
t Z_1  a_1          & = & a_2 t Z_2 \label{z2}\\
t Z_1  t^{-1} a_1 & = &  a_2 t Z_2 t^{-1} \label{z3}
\end{eqnarray}
and end up with the isomorphic group
$$
G_2=\langle F(\Wgamma) * A ,t \mid (\ref{def-rel-basic})-(\ref{z3}) \rangle.
$$
Let us now introduce for every $Z \in \Wgamma$ the new generators
$$
[Z t^{-1}], [tZ], [t Z t^{-1}]
$$
together with the defining relations
\begin{equation}
[Z t^{-1}] = Z t^{-1},   \ [tZ] = tZ, \  [t Z t^{-1}] = t Z t^{-1}. \label{rel-new}
\end{equation}
\noindent
This allows to replace the defining relations 
(\ref{z1})--(\ref{z3}) by
\begin{eqnarray}
[Z_1  t^{-1}] a_1   & = & a_2 [Z_2 t^{-1}]    \label{z1-new} \\
\text{$[ t Z_1]  a_1$}          & = & a_2 [ t Z_2]            \label{z2-new} \\
\text{$[ t Z_1 t^{-1}] a_1$} & = &  a_2 [ t Z_2 t^{-1}] \label{z3-new}
\end{eqnarray}
\noindent
leading to the group 
\begin{equation}
G_3=\langle F(\{Z,[Zt^{-1}],[tZ],[tZt^{-1}]|Z\in\Wgamma\}) * A_1 ,t \mid 
(\ref{def-rel-basic}),(\ref{rel-new})-(\ref{z3-new})\rangle.
\end{equation}
Finally, we can eliminate $t$ and $t^{-1}$ by replacing (\ref{rel-new}) by 
\begin{equation}
 [tZ] = [Z t^{-1}]^{-1} Z^2, \  [t Z t^{-1}] = [t Z] Z^{-1}  [Z t^{-1}]. \label{rel-new-new}
\end{equation}
Doing this replacement we end up with the group
\begin{equation} \label{G4}
G_4=\langle F(\{Z, [Z t^{-1}], [tZ], [t Z t^{-1}]\mid Z \in \Wgamma\}) * A_1 \mid  \text{(\ref{def-rel-basic}), 
(\ref{z1-new})-(\ref{z3-new}), (\ref{rel-new-new})}  \rangle .
\end{equation}
Since each transformation from $G_1$ to $G_4$ is a Tietze
transformation, $G_1$ is isomorphic to $G_4$.
We now want to rewrite the SLPs $\dA_t$ and $\dB_t$ into new SLPs over the 
generators of $G_4$. For this, we can define a deterministic 
rational transducer $T$ that reads a word
$X_0 t^{\alpha_1} X_1 t^{\alpha_2} X_2 \cdots t^{\alpha_m} X_m$ from the
input tape and
\begin{itemize}
\item replaces every occurrence of a factor $t X_i$ with $\alpha_{i+1} \neq -1$ by
the symbol $[tX_i]$,
\item replaces every occurrence of a factor $X_i t^{-1}$ with $\alpha_i \neq 1$ by the symbol $[X_i t^{-1}]$, and finally
\item replaces every occurrence of a factor $t X_i t^{-1}$ by the symbol $[tX_i t^{-1}]$.
\end{itemize}
The state set of the transducer $T$ is $\{ \varepsilon, t\} \cup \{Z, tZ \mid
Z \in W_K\}$ and the transitions are the following (for all $Z,Z' \in W_k$),
where $\$$ is an end marker:
\begin{center}
\setlength{\unitlength}{1.4mm}
\begin{picture}(65,40)(-10,-30)
\gasset{Nframe=y,AHnb=1,ELdist=0.5}
\gasset{Nw=2,Nh=2,Nfill=n,Nadjust=wh,Nadjustdist=.5}
\node(a)(0,0){$\varepsilon$}
\node(b)(20,0){$Z$}
\node(d)(0,-15){$t Z'$}
\node(e)(20,-15){$t$}
\node[Nframe=n](h)(-5,0){}
\node(i)(40,-15){}
\node[Nframe=n](j)(45,-15){}
\drawedge(h,a){}
\drawedge[curvedepth=3](a,b){$Z \mid \varepsilon$}
\drawedge[curvedepth=3](b,a){$t^{-1}  \mid  [Zt^{-1}]$}
\drawedge(b,e){$t \mid Z$}
\drawedge[curvedepth=5](b,i){$\$  \mid  Z$}
\drawedge(i,j){}
\drawedge(d,a){$t^{-1}  \mid  [t Z' t^{-1}]$}
\drawedge[ELside=l,curvedepth=-10](d,i){$\$  \mid  [tZ']$}
\drawedge[curvedepth=3,ELside=r](e,d){$Z'  \mid  \varepsilon$}
\drawedge[curvedepth=3](d,e){$t \mid [t Z']$}
\end{picture}
\end{center}
\noindent
By Lemma \ref{L transducer} we can construct in polynomial time SLPs 
that generate the strings $[\![T]\!](\eval(\dA_t)\$)$ and $[\![T]\!](\eval(\dB_t)\$)$.

Let $G_5$ be the group that is obtained by removing the relations
(\ref{rel-new-new}) from the presentation of $G_4$ in (\ref{G4}), i.e.,
\begin{equation} \label{nn-gen-pres}
G_5 = \langle F(\{Z, [Z t^{-1}], [tZ], [t Z t^{-1}]\mid Z \in W_K\}) * A_1 \mid
\text{(\ref{def-rel-basic}), (\ref{z1-new})--(\ref{z3-new})} \rangle .
\end{equation}

\begin{lemma}
The following are equivalent:
\begin{enumerate}[(a)]
\item $\eval(\dA) = \eval(\dB)$ in $G_0$
\item $\eval(\dA_t) = \eval(\dB_t)$ in $G_1$
\item $[\![T]\!](\eval(\dA_t)\$) = [\![T]\!](\eval(\dB_t)\$)$  in $G_4$ 
\item $[\![T]\!](\eval(\dA_t)\$) = [\![T]\!](\eval(\dB_t)\$)$  in $G_5$
\end{enumerate}
\end{lemma}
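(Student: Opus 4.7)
The plan is to establish the chain (a) $\Leftrightarrow$ (b) $\Leftrightarrow$ (c) $\Leftrightarrow$ (d). The first equivalence is exactly Lemma~\ref{Lemma K,t}, so no new work is needed there.

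For (b) $\Leftrightarrow$ (c): By construction $G_4$ arises from $G_1$ through a sequence of Tietze transformations --- eliminating the generators from $B_1\setminus\{1\}$ via $b=t^{-1}\varphi_1^{-1}(b)t$, introducing the abbreviations (\ref{rel-new}) as fresh generators, and finally eliminating $t$ via (\ref{rel-new-new}). Hence $G_1\cong G_4$. Inspection of the deterministic transducer $T$ shows that on any input of the form $X_0 t^{\alpha_1} X_1\cdots t^{\alpha_m} X_m\$$ it absorbs each maximal factor of the form $t^\beta Z t^\gamma$ (with $Z\in W_K$ and $\beta,\gamma\in\{+1,-1,\text{none}\}$) into a single bracketed symbol exactly as prescribed by (\ref{rel-new}). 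By (\ref{rel-new}) this rewriting preserves the element of $G_4$ represented, so $[\![T]\!](w\$)=w$ in $G_4$ for any valid $w$. Consequently $\eval(\dA_t)=\eval(\dB_t)$ in $G_1\cong G_4$ is equivalent to $[\![T]\!](\eval(\dA_t)\$)=[\![T]\!](\eval(\dB_t)\$)$ in $G_4$.

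For (c) $\Leftrightarrow$ (d): The direction (d) $\Rightarrow$ (c) is immediate, since $G_4$ is obtained from $G_5$ by imposing the extra defining relations (\ref{rel-new-new}), so any equality holding in $G_5$ automatically holds in $G_4$. For the converse I would use a Van Kampen diagram transfer: by the equivalences already established, a $G_4$-equality of the two transducer outputs translates, via Lemma~\ref{equality reduced}, into a concrete Van Kampen diagram for $\eval(\dA)=\eval(\dB)$ in $G_0$ of the shape $(\dagger)$, featuring dark-shaded $t$-faces and light-shaded $\Emc$-faces whose connecting elements $c_1,\ldots,c_{2m}$ lie in $A_1\cup B_1$. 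Because we have arranged $\pi_t(\eval(\dA))=\pi_t(\eval(\dB))$, the transducer produces the same decoration pattern $D_0,\ldots,D_m\in\{\,\cdot\,,\,[\cdot\, t^{-1}],\,[t\,\cdot],\,[t\,\cdot\, t^{-1}]\}$ on both rows. Each pair of matching $t$-faces (one on the top row, one on the bottom) can then be simultaneously absorbed into the decoration of the adjacent $W_K$-symbol, converting each light-shaded $\Emc$-face into a relation of type (\ref{def-rel-basic}), (\ref{z1-new}), (\ref{z2-new}), or (\ref{z3-new}), and making every $B_1$-labelled connecting edge disappear into the decorated symbol via the identity $b=t^{-1}\varphi_1^{-1}(b)t$. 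The result is a Van Kampen diagram over the presentation (\ref{nn-gen-pres}) of $G_5$, witnessing (d).

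The main obstacle is the last implication (c) $\Rightarrow$ (d): one must carefully verify that the simultaneous absorption of $t$-faces is always possible --- which rests on the fact that the decoration pattern on the two rows is identical --- and that the resulting faces are exactly the defining relations (\ref{def-rel-basic}) and (\ref{z1-new})--(\ref{z3-new}) of $G_5$, so that the $(\ref{rel-new-new})$-faces of $G_4$ are genuinely never needed.
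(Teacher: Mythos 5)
Your proposal is correct and follows essentially the same route as the paper: (a)$\Leftrightarrow$(b) via the earlier lemma, (b)$\Leftrightarrow$(c) via the Tietze isomorphism $G_1\cong G_4$ together with the observation that $T$ merely re-bracket{}s the string, (d)$\Rightarrow$(c) trivially, and the remaining implication closed by the same Van Kampen surgery that absorbs the $t$-faces and deletes the $B_1$-labelled vertical edges (the paper phrases this as (a)$\Rightarrow$(d) starting from the diagram $(\bigstar)$ in $G_1$, which is interchangeable with your (c)$\Rightarrow$(d) since (a)$\Leftrightarrow$(c) is already in hand). The only slip is cosmetic: the diagram you start from should be $(\bigstar)$ with $\Emc$-faces rather than $(\dagger)$ in $G_0$, whose light faces are $K$-relations.
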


\begin{proof}
The equivalence of (a) and (b) was stated in Lemma~\ref{Lemma K,t}.
The equivalence of (b) and (c) is clear since $G_1$ and $G_4$ are isomorphic
and the transducer $T$ rewrites a string over the generators $G_1$ into
a string over the generators of $G_4$.
Moreover, (d) implies (c) because we omit  one type of relations, namely (\ref{rel-new-new}),
when going from $G_5$ to $G_4$.
It remains to prove that (a) implies (d). 
If $\eval(\dA) = \eval(\dB)$ in $G_0$, then, as argued in the proof
of Lemma~\ref{Lemma K,t}, we obtain a Van Kampen diagram of the form 
$(\bigstar)$ in the group $G_1$. The boundary of every light-shaded face is
labeled with a relation from ${\cal E}$.
We obtain a Van Kampen diagram for 
$[\![T]\!](\eval(\dA_t)\$) = [\![T]\!](\eval(\dB_t)\$)$ in $G_5$, basically
by removing all vertical edges that connect (i) target nodes of 
$t$-labeled edges or (ii) source nodes of $t^{-1}$-labeled edges
(there are $B_1$-labeled edges in $(\bigstar)$),
see the following example. 
\qed
\end{proof}

\begin{example}
Let us give an example of the transformation from a diagram of 
the form $(\bigstar)$ into a Van Kampen diagram for the group $G_5$.
Assume that the diagram in $G_1$ is:
\begin{center}
\setlength{\unitlength}{.9mm}
\begin{picture}(115,26)(0,-3)
\gasset{Nframe=n,AHnb=1,ELdist=0.5}
\gasset{Nw=.9,Nh=.9,Nfill=y}
\drawpolygon[fillgray=0.92](0,10)(15,16)(15,4)
\drawpolygon[fillgray=0.75](15,4)(15,16)(25,18)(25,2)
\drawpolygon[fillgray=0.92](25,2)(25,18)(40,20)(40,0)
\drawpolygon[fillgray=0.75](40,0)(40,20)(50,20)(50,0)
\drawpolygon[fillgray=0.92](50,0)(50,20)(65,20)(65,0)
\drawpolygon[fillgray=0.75](65,0)(65,20)(75,20)(75,0)
\drawpolygon[fillgray=0.92](90,2)(90,18)(75,20)(75,0)
\drawpolygon[fillgray=0.75](100,4)(100,16)(90,18)(90,2)
\drawpolygon[fillgray=0.92](115,10)(100,16)(100,4)
\node(0)(0,10){}
\node(a)(15,16){}
\node(b)(25,18){}
\node(c)(40,20){}
\node(d)(50,20){}
\node(e)(65,20){}
\node(f)(75,20){}
\node(g)(90,18){}
\node(h)(100,16){}
\node(i)(115,10){}
\node(a')(15,4){}
\node(b')(25,2){}
\node(c')(40,0){}
\node(d')(50,0){}
\node(e')(65,0){}
\node(f')(75,0){}
\node(g')(90,2){}
\node(h')(100,4){}
\drawedge(0,a){$X_0$}
\drawedge(a,b){$t$}
\drawedge(b,c){$X_1$}
\drawedge(c,d){$t^{-1}$}
\drawedge(d,e){$X_2$}
\drawedge(e,f){$t^{-1}$}
\drawedge(f,g){$X_3$}
\drawedge(g,h){$t$}
\drawedge(h,i){$X_4$}
\drawedge[ELside=r](0,a'){$Y_0$}
\drawedge[ELside=r](a',b'){$t$}
\drawedge[ELside=r](b',c'){$Y_1$}
\drawedge[ELside=r](c',d'){$t^{-1}$}
\drawedge[ELside=r](d',e'){$Y_2$}
\drawedge[ELside=r](e',f'){$t^{-1}$}
\drawedge[ELside=r](f',g'){$Y_3$}
\drawedge[ELside=r](g',h'){$t$}
\drawedge[ELside=r](h',i){$Y_4$}
\drawedge(a,a'){$a_1$}
\drawedge[ELpos=50](b,b'){$b_1$}
\drawedge[ELpos=50](c,c'){$b_2$}
\drawedge[ELpos=50](d,d'){$a_2$}
\drawedge[ELpos=50](e,e'){$b_3$}
\drawedge[ELpos=50](f,f'){$a_3$}
\drawedge[ELpos=50](g,g'){$a_4$}
\drawedge[ELpos=50](h,h'){$b_4$}
\end{picture}
\end{center}
Then we obtain the following Van Kampen diagram in the group $G_5$:
\begin{center}
\setlength{\unitlength}{.9mm}
\begin{picture}(115,26)(0,-3)
\gasset{Nframe=n,AHnb=1,ELdist=0.5}
\gasset{Nw=.9,Nh=.9,Nfill=y}
\drawpolygon[fillgray=1](0,10)(15,16)(15,4)
\drawpolygon[fillgray=1](15,4)(15,16)(25,18)(25,2)
\drawpolygon[fillgray=1](25,2)(25,18)(40,20)(40,0)
\drawpolygon[fillgray=1](40,0)(40,20)(50,20)(50,0)
\drawpolygon[fillgray=1](50,0)(50,20)(65,20)(65,0)
\drawpolygon[fillgray=1](65,0)(65,20)(75,20)(75,0)
\drawpolygon[fillgray=1](90,2)(90,18)(75,20)(75,0)
\drawpolygon[fillgray=1](100,4)(100,16)(90,18)(90,2)
\drawpolygon[fillgray=1](115,10)(100,16)(100,4)
\node(0)(0,10){}
\node(a)(15,16){}
\node(d)(50,20){}
\node(f)(75,20){}
\node(g)(90,18){}
\node(i)(115,10){}
\node(a')(15,4){}
\node(d')(50,0){}
\node(f')(75,0){}
\node(g')(90,2){}
\drawedge(0,a){$X_0$}
\drawedge(a,d){$[t X_1 t^{-1}]$}
\drawedge(d,f){$[X_2 t^{-1}]$}
\drawedge(f,g){$X_3$}
\drawedge(g,i){$[t X_4]$}
\drawedge[ELside=r](0,a'){$Y_0$}
\drawedge[ELside=r](a',d'){$[t Y_1 t^{-1}]$}
\drawedge[ELside=r](d',f'){$[Y_2 t^{-1}]$}
\drawedge[ELside=r](f',g'){$Y_3$}
\drawedge[ELside=r](g',i){$[t Y_4]$}
\drawedge(a,a'){$a_1$}
\drawedge[ELpos=50](d,d'){$a_2$}
\drawedge[ELpos=50](f,f'){$a_3$}
\drawedge[ELpos=50](g,g'){$a_4$}
\end{picture}
\end{center}
Only the relations (\ref{def-rel-basic}) and (\ref{z1-new})--(\ref{z3-new}) are used in this diagram.
\end{example}
For the further considerations, we denote the SLPs for
the strings $[\![T]\!](\eval(\dA_t)\$)$ and $[\![T]\!](\eval(\dB_t)\$)$ again with 
$\dA$ and $\dB$, respectively.
It remains to check whether $\eval(\dA) = \eval(\dB)$ in $G_5$.
Let 
$$\Zmc = \{Z, [Z t^{-1}], [tZ], [t Z t^{-1}]\mid Z \in \Wgamma\}$$ 
and let us redefine the set of defining relations $\Emc$ as
the set of all defining relations
of the form (\ref{def-rel-basic}), (\ref{z1-new})--(\ref{z3-new}).
Thus, 
$$
G_5=\langle F(\Zmc) * A_1 \mid  \Emc \rangle,
$$ 
where every defining relation in $\Emc$ is of the form
$Z_1 a_1 = a_2 Z_2$ for $Z_1, Z_2 \in \Zmc$ and $a_1,a_2 \in A_1$.

\subsection{Transforming  $\langle F(\Zmc) * A_1 \mid  \Emc \rangle$ into an
  HNN-extension} \label{S transform into HNN}

By further Tietze transformations we will show that $G_5$ is actually 
an HNN-extension with base group $A_1$ and associated subgroups 
$A_1$ and $A_1$. This will prove Lemma~\ref{L main}.
To this end, let us take a relation $Z_1 a_1 = a_2 Z_2$
with $Z_1 \neq Z_2$. We can eliminate $Z_2$ by replacing 
it with $a_2^{-1} Z_1 a_1$. Subwords of the form $a a'$ with $a,a' \in A_1$
that arise after this Tietze transformation can of course be multiplied out
in the finite group $A_1$. We carry out the same replacement $Z_2 \mapsto 
a_2^{-1} Z_1 a_1$ also in the SLPs  $\dA$ and $\dB$ which increases the size
only by an additive constant and repeat these steps. 
After polynomially many Tietze transformations we arrive at
a presentation, where all defining relations are of the form $Z = a_1 Z a_2$,
i.e. $a_2 = Z^{-1} a_1^{-1} Z$.
Let us write the resulting presentation as 
$$
G_6 =\langle A_1, Z_1, \ldots, Z_m \mid Z_i^{-1} a Z_i = \psi_i(a) \
(1 \leq i \leq m, a \in \dom(\psi_i)) \rangle.
$$
Note that every mapping $\psi_i$ is a partial automorphism on $A_1$ since
it results from the conjugation by some element in our initial group.
Hence, we obtained an HNN-extension over $A_1$.

We can now finish the proof of Lemma~\ref{L main}, which
states that the problem $\RCWP(H,A,B,\varphi_1,\ldots, \varphi_k)$ is
polynomial time Turing-reducible to the problems
$\RCWP(H,A,B,\varphi_2,\ldots,\varphi_k)$ and $\RUCWP(A_1,A_1,A_1)$. 
Using oracle access 
to $\RCWP(H,A,B,\varphi_2,\ldots, \varphi_k)$ 
(which was necessary for computing the set of defining relations
$\Emc$ from (\ref{rel E})), we have computed in polynomial time 
from a given $\RCWP(H,A,B,\varphi_1,\ldots, \varphi_k)$-instance an 
$\UCWP(A_1,A_1,A_1)$-instance, which is a positive instance
if and only if the original 
$\RCWP(H,A,B,\varphi_1,\ldots, \varphi_k)$-instance is positive.
A final application of Lemma~\ref{L reduced}
allows to reduce $\UCWP(A_1,A_1,A_1)$ to $\RUCWP(A_1,A_1,A_1)$.
This finishes the proof of Lemma~\ref{L main}.

\subsection{Finishing the proof of Theorem~\ref{T real main}}

We now apply Lemma~\ref{L reducing to constant}
to the problem $\RUCWP(A_1,A_1,A_1)$ (one of the two target problems
in Lemma~\ref{L main}). An input for this problem can 
be reduced in polynomial time to an instance of a problem
$\RCWP(A_1,A_1,A_1,\psi_1,\ldots,\psi_k)$, where
$\psi_1,\ldots,\psi_k : A_1 \to A_1$ and $k \leq \delta$
(we even have $k \leq 2 |A_1|! \cdot 2^{|A_1|} \leq 2 |A|! \cdot 2^{|A|} = \delta$).

We now separate the (constantly many) stable letters $t_1, \ldots, t_k$ 
that occur in the $\RCWP(A_1,A_1,A_1,\psi_1,\ldots,\psi_k)$-instance
into two sets:
$\{ t_1, \ldots, t_k \} = S_1 \cup S_2$ where $S_1 = \{ t_i \mid
\dom(\psi_i) =  A_1\}$ and 
$S_2 = \{ t_1, \ldots, t_k \} \setminus S_1$. W.l.o.g. assume that
$S_2 = \{t_1, \ldots, t_\ell \}$.
Then we can write our HNN-extension $G_6$ as
\begin{equation}  \label{G6}
G_6 = \langle H', t_1, \ldots, t_\ell \mid   a^ {t_i} = \psi_i(a) \ (1 \leq i
\leq \ell, a \in \dom(\psi_i) \rangle,
\end{equation}
where
$$
H' = \langle A_1, t_{\ell+1}, \ldots, t_k \mid  a^{t_i} = \psi_i(a) \ (\ell+1 \leq i \leq k, a \in A_1) \rangle.
$$
Note that $|\dom(\psi_i)| < |A_1|$ for every $1 \leq i \leq \ell$
and that $A_1 = \dom(\psi_i)$ for every $\ell+1 \leq i \leq k$. 
By Lemma~\ref{cwp semidirect}, 
$\CWP(H')$ can be solved in polynomial time; 
$H'$ is in fact the semidirect product $A_1 \rtimes_{\varphi} F(t_{\ell+1},
\ldots, t_k)$, where $\varphi : F(t_{\ell+1},
\ldots, t_k) \to \text{Aut}(A_1)$ is defined by
$\varphi(t_i) = \psi_i$. 
Recall also that at the end of Section~\ref{S constant number},
$A_1$ was chosen to be of maximal cardinality among the domains
of all partial isomorphisms $\varphi_1,\ldots,\varphi_k$.
The following proposition summarizes what we have shown so far:

\begin{proposition}\label{prop-reduce}
Let $\varphi_1, \ldots, \varphi_k : A \to B$
be partial isomorphisms, where 
$k \leq \delta$, $A_1 = \dom(\varphi_1)$,
and w.l.o.g  $|A_1| \geq |\dom(\varphi_i)|$
for $1 \leq i \leq k$.
From an instance $(\dA,\dB)$ of the problem 
$\RCWP(H,A,B,\varphi_1,\ldots,\varphi_k)$ 
we can compute in polynomial time with
oracle access to the problem 
$\RCWP(H,A,B,\varphi_2,\ldots,\varphi_k)$
\begin{enumerate}[(1)]
\item a semidirect product 
$A_1 \rtimes_{\varphi} F$, where $F$ is a free group
of rank at most $\delta$, 
\item partial automorphisms $\psi_1,\ldots,\psi_\ell : A_1 \to A_1$
with $\ell \leq \delta$ and 
$|\dom(\psi_i)| < |A_1|$ for all $1 \leq i \leq \ell$, and
\item an  
$\RCWP(A_1 \rtimes_{\varphi} F, A_1, A_1, \psi_1,\ldots,\psi_\ell)$-instance,
which is positive if and only if the initial 
$\RCWP(H,A,B,\varphi_1,\ldots,\varphi_k)$-instance $(\dA,\dB)$
is positive.
\end{enumerate}
\end{proposition}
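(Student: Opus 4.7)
The proof would simply assemble the reductions carried out in Sections~\ref{S reduced}--\ref{S transform into HNN} together with the post-processing described at the start of Section~\ref{S CWP HNN}.\ref{S transform into HNN} (more precisely, the material between Lemma~\ref{L main} and the proposition statement). The plan is to apply these transformations in order and track that every step is polynomial time, that the oracle queries stay inside $\RCWP(H,A,B,\varphi_2,\ldots,\varphi_k)$, and that the resulting instance has exactly the shape claimed in (1)--(3).

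First, I would invoke Lemma~\ref{L main} on the input $(\dA,\dB)$. This gives a polynomial time Turing-reduction to two target problems: $\RCWP(H,A,B,\varphi_2,\ldots,\varphi_k)$, which is exactly the oracle we are allowed to call, and $\RUCWP(A_1,A_1,A_1)$. All oracle calls to the former type are absorbed into the computation of the set $\Emc$ from (\ref{rel E}) in Section~\ref{S abstracting}, which is the unique place in the chain of reductions where a $\RCWP(H,A,B,\varphi_2,\ldots,\varphi_k)$-query is actually issued. So after this step we are left with a polynomial-size $\RUCWP(A_1,A_1,A_1)$-instance.

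Second, I would feed this $\RUCWP(A_1,A_1,A_1)$-instance into Lemma~\ref{L reducing to constant}. In polynomial time this produces partial isomorphisms $\psi_1,\ldots,\psi_k : A_1 \to A_1$ with $k \leq \delta$ together with SLPs $\dA',\dB'$ forming an equivalent $\RCWP(A_1,A_1,A_1,\psi_1,\ldots,\psi_k)$-instance. Note that the constant bound $\delta = 2 \cdot |A|! \cdot 2^{|A|}$ dominates the analogous bound $2 \cdot |A_1|! \cdot 2^{|A_1|}$, so the choice of $\delta$ is compatible with the second application of the lemma.

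Third, I would perform the cosmetic rewriting from (\ref{G6}): split $\{t_1,\ldots,t_k\}$ into $S_1 = \{t_i \mid \dom(\psi_i) = A_1\}$ and $S_2 = \{t_1,\ldots,t_k\} \setminus S_1$, reindex so that $S_2 = \{t_1,\ldots,t_\ell\}$, and absorb the stable letters in $S_1$ into the base group to obtain
\[
H' = A_1 \rtimes_{\varphi} F(S_1),
\]
where $\varphi(t_i) = \psi_i$ for $t_i \in S_1$ (well-defined because each such $\psi_i$ is a total automorphism of $A_1$, by the initial choice of $A_1$ as having maximal domain). The rank of $F(S_1)$ is at most $k \leq \delta$, establishing (1). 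The remaining partial isomorphisms $\psi_1,\ldots,\psi_\ell$ all satisfy $|\dom(\psi_i)| < |A_1|$ by construction of $S_2$, establishing (2). Rewriting the SLPs $\dA',\dB'$ over the new generating set of $H'$ (which is a purely syntactic relabeling) yields the $\RCWP(H',A_1,A_1,\psi_1,\ldots,\psi_\ell)$-instance of (3), and its equivalence to the original input follows by chaining the equivalences from Lemma~\ref{L main}, Lemma~\ref{L reducing to constant} and the Tietze-style rewriting producing $G_6$. The only subtle point is the bookkeeping in step three — making sure that the rewritten SLPs really represent elements of $H'$ and that the new partial isomorphisms still satisfy $|\dom(\psi_i)| < |A_1|$ — but this is immediate from the case analysis defining $S_1$ and $S_2$.
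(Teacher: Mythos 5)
Your proposal is correct and follows essentially the same route as the paper: apply Lemma~\ref{L main}, feed the resulting $\RUCWP(A_1,A_1,A_1)$-instance into Lemma~\ref{L reducing to constant}, and then split the stable letters into those with full domain (absorbed into the semidirect product $H' = A_1 \rtimes_\varphi F$) and those with strictly smaller domain. The only quibble is your parenthetical justification in step three: the $\psi_i$ with $t_i \in S_1$ are total automorphisms of $A_1$ simply because $\dom(\psi_i)=A_1$ is the defining condition of $S_1$ and an injective endomorphism of a finite group is onto --- the maximality of $|A_1|$ is not what is used there.
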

Note that in (1) there are only
constantly many semidirect products of the form
$A_1 \rtimes_{\varphi} F$
and that $\CWP(A_1 \rtimes_{\varphi} F)$ can be solved
in polynomial time by Lemma~\ref{cwp semidirect}.

We are now ready to prove the main theorem of this paper.

\medskip

\noindent
{\em Proof of Theorem~\ref{T real main}.}
By Lemma~\ref{L reduced} and Lemma~\ref{L reducing to constant}
it suffices to solve a problem
$\RCWP(H,A,B,\varphi_1,\ldots,\varphi_k)$ (with $k \leq \delta$) in polynomial time.
For this we apply Proposition~\ref{prop-reduce} repeatedly.
We obtain a computation tree, where the root is labeled with 
an $\RCWP(H,A,B,\varphi_1,\ldots,\varphi_k)$-instance and
every other node is labeled with an instance of a problem
$\RCWP(C \rtimes_{\varphi} F, C, C, \theta_1, \ldots, \theta_p)$, where
$F$ is a free group of rank at most $\delta$, $C$ is a subgroup of 
our finite group $A$,
and $p \leq \delta$. The number of these problems is bounded by
some fixed constant. 
Since along each edge in the tree, either the number of stable letters reduces
by one, or the maximal size of an associated subgroup becomes strictly smaller,
the height of the tree is bounded by a constant (it is at most
$|A| \cdot \delta = 2 \cdot |A| \cdot |A|! \cdot 2^{|A|}$).
Moreover, along each tree edge, the size of a problem instance can 
grow only polynomially. Hence, each problem instance that appears
in the computation tree has polynomial size w.r.t. the input size.
Hence, the total running time is bounded polynomially.
\qed

\section{Amalgamated Products}\label{amalprod}

In this section we prove a transfer theorem for 
the compressed word problem for an amalgamated free product,
where the amalgamated subgroups are finite. We will deduce
this result from our transfer theorem for HNN-extensions.

Let $H_1$ and $H_2$ be two finitely generated groups. 
Let $A_1\leq H_1$ and $A_2\leq H_2$ be finite and
$\varphi : A_1\mapsto A_2$ an isomorphism. The 
\textit{amalgamated free product of $H_1$ and $H_2$, amalgamating the 
subgroups $A_1$ and $A_2$ by the isomorphism $\varphi$}, is the group
$$G=\langle H_1*H_2 \mid a=\varphi(a)\ (a\in A_1) \rangle.$$

\begin{theorem}\label{T_main_amal}
Let $G = \langle H_1*H_2 \mid a=\varphi(a) \ (a\in A_1)  \rangle$ be an 
amalgamated free product with $A_1$ finite. Then
$\CWP(G) \leq_T^P \{\CWP(H_1),\CWP(H_2)\}$.
\end{theorem}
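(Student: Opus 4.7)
The plan is to reduce the amalgamated free product to an HNN-extension over the free product and then invoke the two transfer theorems already established (Theorem~\ref{T real main} and Theorem~\ref{T free}). Concretely, the classical construction of Neumann realises $G = \langle H_1 * H_2 \mid a = \varphi(a)\ (a\in A_1)\rangle$ as a subgroup of the HNN-extension
$$
\widehat{G} \;=\; \langle H_1 * H_2, t \mid t^{-1} a t = \varphi(a)\ (a \in A_1)\rangle,
$$
whose base group is the free product $H_1 * H_2$ and whose (single) associated subgroups are the finite groups $A_1, A_2 \leq H_1 * H_2$. The embedding $\iota : G \hookrightarrow \widehat{G}$ can be taken to be the identity on $H_1$ and $\iota(y) = t y t^{-1}$ for $y \in H_2$; for $a \in A_1$ we have $\iota(\varphi(a)) = t\varphi(a) t^{-1} = a = \iota(a)$ in $\widehat{G}$, using the defining HNN-relation, so the map is well defined, and injectivity is the standard fact that the amalgamated product embeds into this HNN-extension (see~\cite{LySch77}).

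The key observation is that this embedding can be implemented on compressed input by a deterministic rational transducer $T$: let $\Sigma_i$ be a finite generating set of $H_i$ (so $\Sigma_1 \cap \Sigma_2 = \emptyset$ and $\Sigma_1 \cup \Sigma_2$ generates $G$). Define $T$ with a single state that, on reading a letter $x \in \Sigma_1^{\pm 1}$, outputs $x$, and on reading a letter $y \in \Sigma_2^{\pm 1}$, outputs $t y t^{-1}$. Then $[\![T]\!](w)$ represents $\iota(w)$ in $\widehat{G}$ for every $w \in (\Sigma_1 \cup \Sigma_2)^{\pm *}$, and by Lemma~\ref{L transducer} we can compute from a given SLP $\dA$ over $(\Sigma_1 \cup \Sigma_2)^{\pm 1}$ in polynomial time an SLP $\dA'$ over $\Sigma_1^{\pm 1} \cup \Sigma_2^{\pm 1} \cup \{t,t^{-1}\}$ with $\eval(\dA') = [\![T]\!](\eval(\dA))$. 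Since $\iota$ is an embedding, $\eval(\dA) = 1$ in $G$ if and only if $\eval(\dA') = 1$ in $\widehat{G}$.

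It remains to chain the reductions. The SLP $\dA'$ is an instance of $\CWP(\widehat{G})$, which is a special case of $\UCWP(H_1 * H_2, A_1, A_2)$ (with the single partial isomorphism $\varphi$), so by Theorem~\ref{T real main} it is polynomial-time Turing-reducible to $\CWP(H_1 * H_2)$. Theorem~\ref{T free} then further reduces $\CWP(H_1 * H_2)$ to $\{\CWP(H_1), \CWP(H_2)\}$. Composing these three polynomial-time Turing reductions (the transducer-based rewriting, the HNN transfer theorem, and the free-product transfer theorem), we obtain $\CWP(G) \leq_T^P \{\CWP(H_1), \CWP(H_2)\}$, which is the claim. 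There is no real obstacle here beyond verifying that the ``conjugate-by-$t$'' embedding is compatible with the transducer formalism; both facts are already standard or immediate from the material of Section~\ref{S prel}.
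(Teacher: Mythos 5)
Your proposal is correct and follows essentially the same route as the paper: embed $G$ into the HNN-extension $\langle H_1*H_2, t \mid t^{-1}at=\varphi(a)\ (a\in A_1)\rangle$ via the standard conjugation embedding (you conjugate $H_2$ by $t$, the paper conjugates $H_1$ by $t^{-1}$ --- an immaterial difference), rewrite the SLP accordingly in polynomial time, and then chain Theorem~\ref{T real main} with Theorem~\ref{T free}. The transducer formalisation of the rewriting step is a harmless elaboration of what the paper dismisses as ``easily computed''.
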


\begin{proof}
It is well known \cite[Theorem 2.6, p. 187]{LySch77}
that $G$ can be embedded into the HNN-extension 
$$G':= \langle H_1*H_2,t \mid a^t=\varphi(a)\ (a\in A_1)\rangle$$
by the homomorphism $\Phi$ with
$$\Phi(x) \ = \ 
    \begin{cases} t^{-1}xt & \text{ if } x \in H_1 \\
                  x        & \text{ if } x \in H_2.
    \end {cases}
$$
Given an SLP $\dA$ we can easily compute an SLP $\dB$ with
$\eval(\dB) = \Phi(\eval(\dA))$. We obtain
\begin{eqnarray*}
\eval(\dA)=1 \text{ in } G  & \iff & \Phi(\eval(\dA)) =1 \text{ in } \Phi(G)\\
                        & \iff & \eval(\dB)=1 \text{ in } G'. 
\end{eqnarray*}
By Theorem \ref{T real main} and Theorem \ref{T free},
$\CWP(G')$ can be solved in polynomial
time with oracle access to $\CWP(H_1)$ and $\CWP(H_2)$.
\qed
\end{proof}

\section{Open Problems}\label{openprobs}

We have shown that the compressed word problem for an HNN-extension with
finite associated subgroups is polynomial time Turing-reducible to 
the compressed word problem for the base group. Here, the base group and 
the associated subgroups are fixed, i.e. are not part of the input.
One might also consider the \emph{uniform} compressed word problem for HNN-extensions
of the form $\langle H, t \mid a^t = \varphi(a) \ (a \in A)\rangle$,
where $H$ is a finite group that is part of the input. It is not clear,
whether this problem can be solved in polynomial time.

One might also consider the compressed word problem for HNN-extensions of 
semigroups \cite{How63}.

\bibliographystyle{abbrv}

\def\cprime{$'$}

\end{document}